\newtheorem{thm}{Th\'eor\`eme}[section]
\newtheorem{theo}{Theorem}[section]
\newtheorem{lema}{Lemma}[section]
\newtheorem{coro}[theo]{Corollary}
\theoremstyle{exemple}
\newtheorem{rk}[thm]{Remark}
\theoremstyle{definition}
\def\il{\int}
\def\calA{{\cal{A}}}
\def\R{\mathbb{R}}
\def\N{\mathbb{N}}
\def\calE{{\cal{E}}}
\def\calF{{\cal{F}}}
\newcommand{\Wa}{W^{\alpha/2,2}}
\newcommand{\Wo}{W_0^{\alpha/2,2}(\Omega)}
\newcommand{\cf}{\frac{\calA{(d,\alpha)}}{2}}
\newcommand{\V}{V}
\newcommand{\varp}{\varphi}
\newcommand{\lam}{\lambda}
\newcommand{\Lam}{\Lambda}
\newcommand{\Om}{\Omega}
\newcommand{\alp}{\alpha}
\newcommand{\phiok}{\varp_0^{(k)}}
\newcommand{\phiv}{\varp_0^{V}}
\newcommand{\phis}{\varp_0^{*}}
\newcommand{\lamv}{\lam_0^{V}}
\newcommand{\xiv}{\xi^{V}}
\newcommand{\phio}{\varp_0}
\newcommand{\lamos}{\lam_0^{*}}
\newcommand{\xis}{\xi^{*}}
\numberwithin{equation}{section}
\begin{document}

\title{Intrinsic ultracontractivity for fractional Schr\"{o}diner operators}

\author{ Mohamed Ali BELDI}

\date{}
\maketitle{}
\begin{abstract}We establish sharp pointwise estimates for the ground states of some singular fractional Schr\"odinger operators on relatively compact Euclidean subsets. The considered operators are of the type $(-\Delta)^{\alpha/2}|_\Om-V$, where $V\in L^1_{Loc}$ and $(-\Delta)^{\alpha/2}|_\Om$ is the fractional-Laplacien
on an open subset $\Om$ in $\R^d$ with zero exterior condition . The intrinsic ultracontractivity property for such operators is discussed as well and a sharp large time asymptotic for their heat
kernels is derived.
\end{abstract}
{\bf Key words}: Improved Sobolev inequality, ground state, intrinsic ultracontractivity, Dirichlet form.

\section{Introduction}
This paper is to  study  intrinsic ultracontractivity for the Feynman-Kac semigroups generated
  by Schr\"{o}dinger   operators based on fractional Laplacians and obtain two sharp estimates of the
  first eigenfunction    of these operators, we use potentiel methods and Sobolev inequalities.
Let $\Om$ be a $C^{1,1}$ bounded domain in $\R^d$ containing the origin. Let
$L_0:=(-\Delta)^{\alpha/2}|_\Om,\ 0<\alpha<\min(2,d)$ be the fractional Laplacien on $\Om$ with zero
exterior condition in $L^2(\Om,dx)$.  It is well known  that $L_0$ has purely discrete spectrum
$0<\lam_0<\lam_1<\cdots<\lam_k\to\infty$ and that the associated semigroup $T_t:=e^{-tL_0}, t>0$ is
irreducible. Hence $L_0$ has a unique strictly positive normalized ground state $\varp_0$. Furthermore
Kulczycki proved in \cite{kul} that the semigroup $(T_t)_{ t>0}$ is intrinsically ultracontractive
(IUC for short) regardless the regularity of $\Om$. The latter property induces among others the large time
asymptotic for the heat kernel $p_t$ of  $e^{-tL_0},\ t>0$:
\begin{eqnarray}
p_t(x,y)\sim e^{-t\lam_0}\phio(x)\phio(y),\ {\rm on}\ \Om\times\Om.
\end{eqnarray}
Such type of estimates are very important in the sense that they give precise information on the local
behavior of the ground state and the heat kernel (for large $t$) as well as on their respective rates of
decay at the boundary.\\
  Set $G$  the Green's Kernel of $L_0$, that since $(T_t)_{ t>0}$  is IUC, then there is
a finite constant $C_G$, such that

                  \begin{eqnarray}
                  G(x,y)\geq C_G\varp_0(x)\varp_0(y), \end{eqnarray}

    yielding,
                \begin{eqnarray}
             \xi_0(x)=  \int G(x,y)\,dy \geq C_G\varp_0(x)\int\varp_0(y)\, dy,
                \end{eqnarray}

 where  $\xi_0$  denote the solution of  $L_0\xi_0 =1$.

In this paper    we consider the fractional Schr\"{o}diner operators
$$L_V:=L_0-V,~~V\in L^1_{Loc}(\Om).$$
  In particular the case
  \begin{eqnarray}\label{pot0f2}
  V(x)=\frac{c}{|x|^\alpha},\ 0<c\leq
  c^*:=\frac{2^\alpha\Gamma^2(\frac{d+\alpha}{4})}{\Gamma^2(\frac{d-\alpha}{4})},
  \end{eqnarray}
  \begin{eqnarray}\label{pot1f2}
  V(x)=\frac{c}{\delta^{\alpha}},\ 0<c\leq c^*:=\frac{\Gamma^2(\frac{\alpha+1}{2})}{\pi},
  \end{eqnarray}
  where $\delta$ is the Euclidian distance function between $x$ and $\Om^c$, $d\geq 3$ and $\Om$ is regular (see \cite{fmt13,frank}).\\
We shall prove that under some realistic assumptions, and especially under the assumptions
that some improved Sobolev and Hardy-type inequalities hold true,  then The operator $L_V$ still has
discrete spectrum, a unique normalized ground state $\varp_0^V>0\ a.e.$  Furthermore $\varp_0^V$  is
comparable to the solution, $\xi_0^V$ of the equation $L_V\xi_0^V=1$ (i.e., comparable to $L_V^{-1}1$). In
other words, if we designate by $G^V$ the Green's kernel of $L_V$, then
\begin{eqnarray}
\phiv\sim \xi_0^V=\int_\Om G^V(x,y)\,dy  ~~{\rm if}\ \Om\ {\rm is}\ C^{1,1}.
\end{eqnarray}
We shall however,  prove that the intrinsic ultracontractivity property is still preserved. Namely, the
operator $e^{-tL_V},\ t>0$ is IUC for domains which are less regular than $C^{1,1}$ domains.\\

For $\alpha=2$ (the local case), various types of comparison results as well as pontwise estimates for
ground states of the Dirichlet-Schr\"odinger operator were obtained in \cite{zuazua,
dupaigne-nedev,davilla-dupaigne,cipriani-grillo,davies-book} and in \cite{benamor-osaka} for more general
potentials in the framework of (strongly) local Dirichlet. Whereas the preservation of the intrinsic
ultracontractivity can be found in \cite{banuelos91} for Kato potentials, in \cite{cipriani-grillo} and in
\cite{benamor-osaka} in the framework of (strongly) local Dirichlet.  The potentials satisfying (\ref{pot0f2})
are treated in \cite{ali-IOP}\\

%
%
Our method relies  basically on an improved Sobolev inequality together with  a transformation argument
(Doob's transformation) which leads to a generalized ground state representation.\\
The paper is organized as follows: In section2 we give the backgrounds together with some preparing results.
For the comparability of the ground states we shall consider two situations separately: the subcritical
(section3) and the critical case (section4).\\
To get the estimates for the ground states of the approximating operator we shall use on one side the
intrinsic ultracontractivity property and on the other side Moser's  iteration technique.
\section{Preparing results}
We first give some preliminary results that are necessary for the later development of the paper. Some of
them are known. However, for the convenience of the reader we shall give new proofs for them.\\
Let $0<\alpha<\min(2,d)$. Consider the quadratic form $\calE^\alpha$ defined in $L^2:=L^2(\R^d,dx)$ by
\begin{eqnarray}
\calE^\alpha(f,g)&=&\frac{1}{2}\calA{(d,\alpha)}\il_{\R^d}\il_{\R^d}
\frac{(f(x)-f(y))(g(x)-g(y))}{|x-y|^{d+\alpha}}\,dxdy,\nonumber\\
D(\calE^\alpha)&=&W^{\alpha/2,2}(\R^d)
=\{  f\in L^2(\R^d)\colon\,\calE^\alpha[f]:=\calE^\alpha(f,f)<\infty\},\,
\end{eqnarray}
where
\begin{eqnarray}
\calA{(d,\alpha)}=\frac{\alpha\Gamma(\frac{d+\alpha}{2})}
{2^{1-\alpha}\pi^{d/2}\Gamma(1-\frac{\alpha}{2})}.
\label{analfaf2}
\end{eqnarray}
It is well known that $\calE^\alpha$ is a transient Dirichlet form and  is related (via Kato representation
theorem) to the selfadjoint operator, commonly named the  $\alpha$-fractional Laplacian on  $\R^d$ which we
shall denote by  $ (-\Delta)^{\alpha/2}$.\\
Alternatively, the expression of the operator  $(-\Delta)^{\alpha/2}$ is given by (see
\cite[Eq.3.11]{bogdan})
\begin{equation}\label{Lapff2}
    (-\Delta)^{\alpha/2}f(x)= \calA{(d,\alpha)} \lim_{\epsilon\rightarrow 0^+}\int_{\{y\in{\mathbb{R}^d},
    |y-x|>\epsilon\}} {\frac{f(x)-f(y)}{|x-y|^{d+\alpha}} dy},
\end{equation}
provided the limit exists and is finite.\\
From now on we shall ignore in the notations the dependence on $\alpha$ and shall set $\int\cdots$ as a
shorthand for $\int_{\R^d}\cdots$. The notation q.e. means quasi everywhere with respect to the capacity
induced by $\calE$.\\
For every open  subset  $\Om\subset\R^d,$ we  denote by $L_0:=(-\Delta)^{\alpha/2}|_\Om$ the localization of
$(-\Delta)^{\alpha/2}$ on $\Om$, i.e., the operator which Dirichlet form in $L^2(\overline\Om,dx)$ is given
by
\begin{eqnarray*}
D(\calE)&=&W_0^{\alpha/2}(\Om)\colon=\{f\in W^{\alpha/2,2}(\R^d)\colon\, f=0 ~~~\calE-q.
e.~on~\Om^c\}\nonumber\\
\calE(f,g)&=&\frac{1}{2}\calA{(d,\alpha)}\int
\int\frac{(f(x)-f(y))(g(x)-g(y))}{|x-y|^{d+\alpha}}\,dxdy\nonumber\\
&=&\frac{1}{2}\calA{(d,\alpha)}\big(\int_\Om\int_\Om
\frac{(f(x)-f(y))(g(x)-g(y))}{|x-y|^{d+\alpha}}\,dxdy\nonumber\\&+&\int_\Om f(x)g(x)\kappa_\Om^{(\alpha)}(x)\,dx\big),
\forall\,f,g\in W_0^{\alpha/2}(\Om).
\end{eqnarray*}
where
\begin{eqnarray}
\kappa_\Om^{(\alpha)}(x):=\calA(d,\alpha)\int_{\Om^c}\frac{1}{|x-y|^{d+\alpha}}\,dy.
\end{eqnarray}
%
The Dirichlet form $\calE$ coincides with the closure of $\calE^\alpha$ restricted to $C_c^\infty(\Om)$, and is
therefore regular and furthermore transcient.\\
We also recall the known fact  that $L_0$ is irreducible even when $\Om$  is disconnected
\cite[p.93]{bogdan}.\\
If moreover $\Om$ is bounded, thanks to the well known Sobolev embedding,
\begin{eqnarray}\label{IS0f2}
\big(\int_{\Om}|f|^{\frac{2d}{d-\alpha}}\,dx\big)^{\frac{d-\alpha}{d}}\leq
C(\Om,d,\alpha)\calE_{}[f],\ \forall\,f\in\Wo,
\label{sobo-freef2}
\end{eqnarray}
the operator $L_0$ has compact resolvent (that we shall denote by $K:=L_0^{-1}$) which together with the
irreducibility property imply that there is a unique continuous bounded, $L^2(\Om,dx)$ normalized function
$\varp_0>0$ and $\lam_0>0$ such that
\begin{eqnarray}
L_0\varp_0=\lam_0\varp_0\ {\rm on}\ \Om.
\end{eqnarray}
We shall prove that this property of $L_0$ is still preserved by perturbations of the form $V\in L^1_{Loc}$.
However, singularities will appear for the ground state of the perturbed operator provided $\Om$ contains
the origin.\\

Let $V_*$ be a fixed positive potentials such  that $V_*\in L^1_{loc}(\Omega)$, we shall  also adopt  some
assumptions along the paper.\\
The  first  assumption is the following Hardy-type inequality : There is a finite constant $C_H>0$ such
that
\begin{eqnarray}
\int\frac{f^2(x)}{\varp_0^2(x)}\,dx\leq C_H\calE[f],\ \forall\,f\in\Wo.
\label{hardyf2}
\end{eqnarray}
\begin{rk}{\rm
The latter inequality holds true for bounded domains satisfying the uniform interior ball condition and
$d\geq 2,\ \alp\neq 1$. Indeed for this class of   domains we already observed that
\begin{eqnarray}
\varp_0\geq C\delta^{\alpha/2},
\label{LBf2}
\end{eqnarray}
whereas \cite[Corollary 2.4]{song} asserts  that if $\Om$ is a Lipschitz domain then  for every $\alp\neq 1$
and $d\geq 2$ we have
\begin{eqnarray}
\int\frac{f^2(x)}{\delta^\alpha(x)}\,dx\leq C_H\calE[f],\ \forall\,f\in\Wo,
\label{LB2f2}
\end{eqnarray}
Combining the two inequalities yields (\ref{hardyf2}).

}

\end{rk}

~~\\

\section{The subcritical case}
In this section we fix:\\
                A positive potentials  $V\in L^1_{loc}(\Om),$  such that there is $\kappa\in(0,1)$, with
\begin{eqnarray}
\int f^2(x)V(x)\,dx\leq\kappa\calE[f],\ \forall\,f\in W_0^{\alpha/2,2}(\Om).
\end{eqnarray}

Having in mind that $0<\kappa<1$, we conclude that the quadratic form which we denote by $\calE_V$ and which
is defined by
\begin{eqnarray}
D(\calE_V)=W_0^{\alpha/2,2}(\Om),\  \calE_V [f]=\calE [f]-\int f^2V\,dx,\ \forall\,f\in
W_0^{\alpha/2,2}(\Om),
\end{eqnarray}
is closed in $L^2(\Om,dx)$ and is even comparable to $\calE$. Hence setting $L_V$ the positive selfadjoint
operator associated to $\calE_V$, we conclude that $L_V$ has purely discrete spectrum
$0<\lam_0^V<\lam_1^V<\cdots<\lam_k^V\to\infty$, as well.\\
Furthermore the associated semigroup $e^{-tL_V},\ t>0$ is irreducible (it has a kernel which dominates the
heat kernel of the free operator $L_0$). Thereby there is a unique $\varp_0^V\in\Wo$ such that
\begin{eqnarray}
\|\varp_0^V\|_{L^2}=1,\ \phiv> 0\ q.e.\ {\rm and}\ L_V\varp_0^V=\lam_0^V\varp_0^V.
\end{eqnarray}

            Two real-valued, measurable  a.e. positive and essentially bounded functions $S$ and $F$ on $\R^d$
            such that either $S\neq 0$ or $F\neq 0$. Let $w\in W_0^{\alpha/2,2}(\Om),$ we say that $w$ is a
            solution of the equation
\begin{eqnarray}
L_V w=Sw+F,
\end{eqnarray}
if
\begin{eqnarray}
\calE_{V}(w,f)=\int fSw\,dx+\int fF,\ \forall\,f\in W_0^{\alpha/2,2}(\Om).
\end{eqnarray}

In the goal of obtaining the precise behavior of the ground state, we proceed to transform the form
$\calE_V$ into  a Dirichlet form on $L^2(\Om,w^2dx)$, where $w>0$ q.e.  is a  solution of the equation $L_V
w=Sw+F$.\\
Let $Q^w$ be the $w$-transform of $L_V-S$, i.e.,  the quadratic form defined in $L^2(\Om,w^2dx)$ by
\begin{eqnarray}
D(Q^w):=\{f\colon\,wf\in\Wo\}\subset L^2(\Om,w^2dx),\\ \ Q^w[f]=\calE_V^w[f]-\int w^2f^2S\,dx,\
\forall\,f\in\,D(Q^w)~~where~~\calE_V^w[f]=\calE_V[wf].
\end{eqnarray}
\begin{lema} The form $Q^w$ is a regular Dirichlet form and
\begin{eqnarray}
Q^w[f]=\cf\il\il \frac{(f(x)-f(y))^2}{|x-y|^{d+\alpha}} w(x)w(y)\,dxdy +\int f^2Fw,\,\ \forall\,f\in
D(Q^w).
\label{IDf2}
\end{eqnarray}
\label{closabilityf2}
\end{lema}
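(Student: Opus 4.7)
The plan is to reduce the statement to a polarization identity for the Gagliardo kernel, combined with the weak equation satisfied by $w$. The starting point is the pointwise algebraic identity that one checks by direct expansion: for $g=wf$,
\begin{eqnarray*}
(g(x)-g(y))^2 = w(x)w(y)\,(f(x)-f(y))^2 + (w(x)-w(y))\bigl(w(x)f(x)^2 - w(y)f(y)^2\bigr).
\end{eqnarray*}
Extending $w$ and $g=wf$ by zero outside $\Om$ (so that $w\in\Wo$ and $g\in\Wo$), I multiply this identity by $\cf |x-y|^{-d-\alpha}$ and integrate over $\R^d\times\R^d$. The left-hand side is exactly $\calE[wf]$. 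After symmetrizing under $x\leftrightarrow y$, the second term on the right becomes
\begin{eqnarray*}
2\cf\int\int \frac{(w(x)-w(y))\,w(x)f(x)^2}{|x-y|^{d+\alpha}}\,dxdy = \calE(w,wf^2),
\end{eqnarray*}
so that $\calE[wf] = \cf \iint |x-y|^{-d-\alpha} w(x)w(y)(f(x)-f(y))^2 dxdy + \calE(w,wf^2)$.

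Subtracting $\int V w^2 f^2\,dx$ from both sides turns $\calE$ into $\calE_V$ everywhere. I then apply the assumed weak equation $L_V w = Sw+F$ with test function $wf^2 \in \Wo$, which gives $\calE_V(w,wf^2) = \int Swf^2\cdot w\,dx + \int Fwf^2\,dx = \int Sw^2f^2\,dx + \int Fwf^2\,dx$. Plugging this in and cancelling $\int Sw^2f^2\,dx$ against the definition $Q^w[f]=\calE_V[wf]-\int Sw^2f^2\,dx$ yields exactly the claimed formula (\ref{IDf2}).

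Once (\ref{IDf2}) is established I read off the Dirichlet-form properties directly. Non-negativity is immediate since both $w(x)w(y)/|x-y|^{d+\alpha}$ and $Fw$ are non-negative. Closedness of $Q^w$ on $L^2(\Om,w^2dx)$ follows from the closedness of $\calE_V$ and the fact that the map $f\mapsto wf$ is an isometry from $L^2(\Om,w^2dx)$ onto a closed subspace of $L^2(\Om,dx)$ (modulo the bounded perturbation $\int Sw^2f^2\,dx$, which is $\calE_V^w$-form bounded since $S\in L^\infty$). The Markov property is transparent from the representation (\ref{IDf2}): for any normal contraction $T$, $(Tf(x)-Tf(y))^2\leq(f(x)-f(y))^2$ and $(Tf)^2\leq f^2$, so $Q^w[Tf]\leq Q^w[f]$. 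For regularity, $C_c^\infty(\Om)$ is a core of $\calE_V$, and the image $\{\phi/w : \phi\in C_c^\infty(\Om)\}$ (interpreted via the isometry $f\mapsto wf$) supplies a dense set of $D(Q^w)$ whose elements have compact support in $\Om$; together with the standard Markov truncation argument this gives the required dense set of continuous compactly supported functions.

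The main obstacle is the justification of $wf^2\in\Wo$ as a legitimate test function, because a priori $f\in D(Q^w)$ only guarantees $wf\in\Wo$ and $f\in L^2(w^2dx)$. I will handle this by first establishing the identity for $f\in D(Q^w)\cap L^\infty$: then $wf\in\Wo\cap L^\infty$ and the standard algebra rules for Dirichlet forms give $wf^2=(wf)\cdot f\in\Wo$. The extension to arbitrary $f\in D(Q^w)$ is then obtained by truncating $f_n:=(-n\vee f)\wedge n$, applying (\ref{IDf2}) to each $f_n$, and passing to the limit via monotone convergence on both terms of the right-hand side (each being non-negative and monotone increasing in $|f_n|$), while the closedness of $Q^w$ controls the left-hand side.
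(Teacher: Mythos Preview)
Your derivation of the identity (\ref{IDf2}) via the polarization identity and the equation for $w$ is essentially the paper's approach; the only difference is that you invoke the equation in its weak form $\calE_V(w,wf^2)=\int(Sw+F)wf^2\,dx$, whereas the paper rewrites the cross term as a principal-value integral and uses the pointwise formula (\ref{Lapff2}) for $(-\Delta)^{\alpha/2}w$. Your variant is slightly cleaner, but note a small slip: from $f\in D(Q^w)\cap L^\infty$ you only get $wf\in\Wo$, not $wf\in\Wo\cap L^\infty$, since $w$ is not assumed bounded. The claim that ``standard algebra rules give $wf^2=(wf)\cdot f\in\Wo$'' therefore does not follow as written. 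A clean fix is to first establish the identity for $f\in C_c^\infty(\Om)$ (then $wf^2\in\Wo$ because $w\in\Wo$ and $f^2$ is a $C_c^\infty$ multiplier), and only afterwards pass to general $f\in D(Q^w)$ by approximation and monotone convergence.

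The genuine gap is in your regularity argument. Regularity requires a core $\calC\subset D(Q^w)\cap C_c(\Om)$ dense both in $D(Q^w)$ for the form norm and in $C_c(\Om)$ uniformly. Your candidate $\{\phi/w:\phi\in C_c^\infty(\Om)\}$ is dense in $D(Q^w)$ via the isometry, but these functions need not be continuous (nothing so far says $w$ is continuous or bounded below on compacta), and there is no reason they approximate $C_c(\Om)$ uniformly. Markov truncation yields bounded functions, not continuous ones, so it does not close this gap. The paper proceeds differently: once (\ref{IDf2}) is in hand, it checks the Fukushima--\=Oshima--Takeda criterion that $C_c^\infty(\Om)\subset D(Q^w)$ by showing
\[
J=\int_\Om\int_\Om\frac{|x-y|^2}{|x-y|^{d+\alpha}}\,w(x)w(y)\,dx\,dy<\infty
\]
(which follows from $w\in L^2(\Om)$ and $\sup_{x\in\Om}\int_\Om|x-y|^{-(d-2+\alpha)}\,dy<\infty$), and then appeals to the Beurling--Deny--LeJan representation for pure-jump-plus-killing forms to conclude regularity. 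You should replace your regularity paragraph by this direct verification.
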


\begin{proof} Obviously $Q^w$ is closed and densely defined as it is unitary equivalent to the closed
densely defined form $\calE_V^w$. Let us prove (\ref{IDf2}).\\
Writing
\begin{eqnarray}
w(x)w(y)\big(\frac{g(x)}{w(x)}-\frac{g(y)}{w(y)}\big)^2&=&(g(x)-g(y))^2+g^2(x)\frac{(w(y)-w(x))}{w(x)}\nonumber\\
&&+g^2(y) \frac{w(x)-w(y)}{w(y)},
\end{eqnarray}
and setting $g=wf$,  we get
\begin{eqnarray}
Q^w[f]&=&\cf\il_{}\il_{}\frac{(f(x)-f(y))^2}{|x-y|^{d+\alpha}}w(x)w(y)\,dx\,dy\nonumber\\
&+&\calA{(d,\alpha)}\il_{}\il \frac{w(x)-w(y)}{|x-y|^{d+\alpha}}{f^2(x)}{w(x)}\,dx\,dy\nonumber\\
 &-&\il{}f^2(x)w^2(x)V(x)\,dx\nonumber\\&-&\il{}f^2(x)w^2(x)S(x),\ \forall\,f\in D(Q^w),\nonumber\\
&\geq&\calA{(d,\alpha)}\il_{}\il \frac{w(x)-w(y)}{|x-y|^{d+\alpha}}{f^2(x)}{w(x)}\,dx\,dy\nonumber\\
 &-&\il{}f^2(x)w^2(x)V(x)\,dx\nonumber\\&-&\il{}f^2(x)w^2(x)S(x)\,dx\ \forall\,f\in D(Q^w),
\end{eqnarray}
we derive in particular that the integral
\begin{eqnarray}
\il_{}\il \frac{w(x)-w(y)}{|x-y|^{d+\alpha}}{f^2(x)}{w(x)}\,dx\,dy\ \ {\rm is\ finite}.
\end{eqnarray}
Thus using Fubini's together with dominated convergence theorem, we achieve
\begin{eqnarray}
Q^w[f]&=&\cf\il_{}\il_{}\frac{(f(x)-f(y))^2}{|x-y|^{d+\alpha}}w(x)w(y)\,dx\,dy\nonumber\\
&+& \calA{(d,\alpha)}\il_{}{f^2(x)}{w(x)}\big(\lim_{\epsilon\rightarrow0}
\il_{\{|x-y|>\epsilon\}}\frac{w(x)-w(y)}{|x-y|^{d+\alpha}}\,dy\big)\,dx\nonumber\\
 \nonumber\\
 &-&\il{}f^2(x)w^2(x)V(x)\,dx\nonumber\\&-&\il{}f^2(x)w^2(x)S(x)\,dx\ \forall\,f\in D(Q^w).
\label{AVD1f2}
\end{eqnarray}
Now, owing to  the fact that $w$ is a solution of the equation
\begin{eqnarray}
L_V w=Sw+F,
\end{eqnarray}
having (\ref{Lapff2}) in hands and  substituting in  (\ref{AVD1f2}) we get formula (\ref{IDf2}) from which we read
that $Q^w$ is Markovian  and hence a Dirichlet form.\\
{\em Regularity}:  Relying on the  expression (\ref{IDf2}) of $Q$, we learn from \cite[Example
1.2.1.]{fuku-oshima}, that $C_c^\infty(\Om)\subset D(Q^w)$ if and only if
\begin{eqnarray}
J:=\int_\Om\int_\Om\frac{|x-y|^2}{|x-y|^{d+\alpha}}w(x)w(y)\,dx\,dy<\infty.
\end{eqnarray}
Set $r'=2-\alpha$.  Then $0<r'<d$.
We rewrite J as
\begin{eqnarray*}
 J: &=& \int_\Om\int_\Om\frac{w(x)w(y)}{|x-y|^{d-r'}}\,dx\,dy\\
  &\leq& \frac{1}{2}\int_\Om\int_\Om\frac{w(x)^2+w(y)^2}{|x-y|^{d-r'}}\,dx\,dy \\
   &=& \int_\Om\int_\Om\frac{w(x)^2}{|x-y|^{d-r'}}\,dx\,dy\\
  &=&  \int_\Om w(x)^2\big(\int_\Om\frac{\,dy}{|x-y|^{d-r'}}\big)\,dx<\infty,
\end{eqnarray*}

with
 $$\sup_{x\in\Om}\big(\int_\Om\frac{\,dy}{|x-y|^{d-r'}}\big)<\infty.$$

Hence  $J$ is finite. \\
Hence from the Beurling--Deny--LeJan formula (see \cite[Theorem 3.2.1, p.108]{fuku-oshima}) together with
the identity (\ref{IDf2}), we learn that $Q^w$ is regular, which completes the proof.
\end{proof}
We designate by $L^w$ the operator associated to $Q^w$ in the weighted Lebesgue space $L^2(\Om,w^2dx)$ and
$T_t^w,\ t>0$ its semigroup. Then
\begin{eqnarray}
 L^w=w^{-1}(L_V-S)w\ {\rm and}\ T_t^w=w^{-1}e^{-t(L_V-S)}w,\ t>0.
 \end{eqnarray}
%

In the sequel set:
$$C_0=C_G\int\phio(y)S(y)w(y)+C_G\int\phio(y)F(y),$$
\begin{eqnarray}
 r:=\frac{d}{d-\alpha}~~, \,A:= (C_0C_H+C_S)\big(1+\lambda_0 C_S|\Om|^{1-1/r}\big)~~ \hbox{and}~~q:=\frac{2r-1}{r}.
\end{eqnarray}

\begin{theo}
\label{TIS1f2}
For every $\,f\in D(Q^w)$, we have
$$(IS1)\qquad \parallel f^2\parallel_{ {L^q}(w^2dx)}\leq A\big(Q^{w}[f]+\int Sf^2w^2\big). $$
\label{main1f2}
\end{theo}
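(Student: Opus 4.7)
The plan is to reduce the claimed inequality, via the substitution $g := wf$, to a H\"older interpolation between the Hardy-type inequality (\ref{hardyf2}) and the fractional Sobolev embedding (\ref{sobo-freef2}). First, from the definition of $Q^w$ in Lemma \ref{closabilityf2} one has $Q^w[f]+\int Sf^2w^2\,dx = \calE_V[wf]$. Setting $g = wf \in \Wo$, one rewrites
$$\int f^{2q}w^2\,dx = \int (g/w)^{2q}\, w^2\,dx = \int (g^2/w^2)^{q-1}\,g^2\,dx,$$
so that establishing (IS1) amounts to bounding the last integral by $A^q\,\calE_V[g]^q$.

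The crucial intermediate step is the pointwise lower bound $w \geq C_0\,\phio$ a.e. Since $w$ is a weak solution of $L_Vw = Sw + F$ with $Sw+F$ bounded and in $L^2(\Om)$, the variational solution admits the Green representation $w(x) = \int G^V(x,y)(S(y)w(y)+F(y))\,dy$, where $G^V$ is the Green kernel of $L_V$. Positivity of $V$ gives $G^V \geq G$ (the Green kernel of $L_0$) by resolvent comparison, and the IUC of $L_0$ yields $G(x,y) \geq C_G\,\phio(x)\phio(y)$. Substituting into the Green representation reproduces exactly the constant $C_0$ defined before the statement and delivers $w \geq C_0\,\phio$.

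With $w \geq C_0\,\phio$ in hand, apply H\"older's inequality with conjugate exponents $1/(q-1)$ and $1/(2-q)$. This pair is admissible because $(q-1)+(2-q)=1$, and crucially $2/(2-q) = 2r$ (since $q = 2-1/r$). One obtains
$$\int (g^2/w^2)^{q-1}\,g^2\,dx \;\leq\; \Bigl(\int g^2/w^2\,dx\Bigr)^{q-1}\, \|g\|_{L^{2r}}^2.$$
The first factor is controlled by combining $w\geq C_0\,\phio$ with (\ref{hardyf2}), giving $\int g^2/w^2\,dx \leq C_0^{-2}C_H\,\calE[g]$; the second by (\ref{sobo-freef2}), $\|g\|_{L^{2r}}^2 \leq C_S\,\calE[g]$. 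Multiplying and taking the $1/q$-th root yields a bound proportional to $\calE[g]$, which by the subcritical comparison $\calE[g]\leq(1-\kappa)^{-1}\calE_V[g]$ becomes proportional to $\calE_V[g]$. The resulting constant can then be reshaped into the announced form $A = (C_0C_H+C_S)(1+\lam_0 C_S|\Om|^{1-1/r})$ by applying Young's inequality $a^\theta b^{1-\theta}\leq \theta a+(1-\theta)b$ to the product $C_H^{(q-1)/q}C_S^{1/q}$, together with the elementary interpolations $\|g\|_{L^2}^2\leq |\Om|^{1-1/r}\,C_S\,\calE[g]$ and Poincar\'e $\calE[g]\geq\lam_0\|g\|_{L^2}^2$. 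The principal obstacle is the rigorous verification of the Green representation for the weak solution $w$ and the resulting pointwise lower bound by $C_0\,\phio$; once this is established, the remainder is H\"older bookkeeping.
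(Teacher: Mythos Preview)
Your argument follows essentially the same route as the paper's: both identify $Q^w[f]+\int Sf^2w^2=\calE_V[wf]$, apply the identical H\"older split $\int w^2f^{2q}\le\bigl(\int f^2\bigr)^{1-1/r}\bigl(\int(wf)^{2r}\bigr)^{1/r}$, control the second factor by the Sobolev inequality, and control $\int f^2$ using Hardy's inequality together with the lower bound $w\ge C_0\phio$ (which you rederive exactly as in Lemma~\ref{w-lowerf2}). The one minor divergence is in how $\int f^2$ is bounded: you apply Hardy directly to $g=wf$ and then pass from $\calE$ to $\calE_V$ at the cost of a factor $(1-\kappa)^{-1}$, whereas the paper (Lemma~\ref{L2-estimatef2}) applies Hardy to $f\phio$, converts $\phio$ to $w$ via the lower bound, and then closes with one more H\"older step (\ref{holderf2}); it is this packaging that produces the factor $1+\lam_0 C_S|\Om|^{1-1/r}$ in $A$. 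Your final claim that your constant can be ``reshaped into'' the displayed $A$ is not substantiated---your bound carries $C_0^{-2}$ and $(1-\kappa)^{-1}$, neither of which appears in $A$---but since the precise value of $A$ plays no role downstream and the paper itself tracks it loosely, this is cosmetic rather than a genuine gap.
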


The proof of Theorem \ref{main1f2} relies upon auxiliary  results which we shall state in three lemmata.

\begin{lema} \label{formulaf2}The following identity holds true
\begin{eqnarray}
\phiv&=&K(V\phiv)+\lamv K\phiv\ a.e,
\end{eqnarray}
where $$K\varphi:=\int G(.,y)\varphi(y)\,dy.$$
\label{ground-repf2}
\end{lema}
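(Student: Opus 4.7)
The plan is to invert the eigenvalue equation $L_V\phiv=\lamv\phiv$ using the Green operator $K=L_0^{-1}$. In variational form the eigenvalue identity reads $\calE_V(\phiv,g)=\lamv(\phiv,g)_{L^2}$ for every $g\in\Wo$; unfolding $\calE_V=\calE-\int V\,\cdot^{\,2}$, this becomes
\[
\calE(\phiv,g)=\int V\phiv g\,dx+\lamv\int\phiv g\,dx,\qquad\forall\,g\in\Wo.
\]

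The core step is to test this identity against $g=K\psi$ for $\psi\in C_c^\infty(\Om)$. Since $K\psi\in D(L_0)\subset\Wo$ with $L_0(K\psi)=\psi$, the form--operator duality yields $\calE(\phiv,K\psi)=(\phiv,\psi)_{L^2}$. Transferring $K$ from $\psi$ onto $V\phiv$ and $\phiv$ by self-adjointness of $K$ on $L^2(\Om)$ gives
\[
\bigl(\phiv-K(V\phiv)-\lamv K\phiv,\,\psi\bigr)_{L^2}=0
\]
for every $\psi\in C_c^\infty(\Om)$, which by density delivers the claimed identity a.e.

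The main obstacle is legitimating the manipulations involving $V\phiv$: one must check that $V\phiv$ is integrable enough so that $K(V\phiv)(x)=\int G(x,y)V(y)\phiv(y)\,dy$ is pointwise finite a.e. and that the Fubini exchange $(V\phiv,K\psi)_{L^2}=(K(V\phiv),\psi)_{L^2}$ is valid. This is exactly where the standing subcritical hypothesis pays off: testing $\int Vf^2\le\kappa\calE[f]$ successively with $f=\phiv$ and $f=K\psi\in\Wo$ gives $V\phiv^{\,2}\in L^1(\Om)$ and $V(K\psi)^2\in L^1(\Om)$, and a Cauchy--Schwarz then yields the absolute convergence of the double integral $\int\!\!\int G(x,y)V(y)\phiv(y)|\psi(x)|\,dxdy$, so Fubini applies and all pairings are well defined.
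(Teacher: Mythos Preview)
Your argument is correct and reaches the same conclusion, but the route differs from the paper's. The paper sets $u:=\phiv-K(V\phiv)-\lamv K\phiv$ and first shows that the measure $V\phiv\,dx$ has finite energy integral with respect to $\calE$ (this follows from $\phiv\in L^2(V\,dx)$ via Cauchy--Schwarz), so that its potential $K(V\phiv)$ actually lies in $\Wo$. Hence $u\in\Wo$, and a direct computation gives $\calE(u,g)=\calE_V(\phiv,g)-\lamv(\phiv,g)=0$ for every $g\in\Wo$; positive definiteness of $\calE$ then forces $u=0$.

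By contrast, you test the weak eigenvalue equation only against $g=K\psi$ with $\psi\in C_c^\infty(\Om)$, and draw the conclusion at the level of $L^2$-pairings. This is a bit more elementary: it bypasses the potential-theoretic fact that finite-energy measures have potentials in the form domain, and it does not invoke coercivity of $\calE$, only density of $C_c^\infty$ in $L^2$. The paper's route, in return, gives the slightly stronger information that the identity holds as an equality in $\Wo$, not merely a.e. One small wording issue: your phrase ``self-adjointness of $K$ on $L^2(\Om)$'' is not quite the right justification, since $V\phiv$ need not lie in $L^2(\Om)$; the legitimate step is precisely the Fubini argument you supply afterwards, based on absolute convergence of the double integral $\int\!\!\int G(x,y)V(y)\phiv(y)|\psi(x)|\,dx\,dy$.
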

\begin{proof}
Set
\begin{eqnarray}
u=\phiv-K(V\phiv)-\lamv K\phiv.
\end{eqnarray}
Owing to the fact that $\phiv$ lies in $\Wo$ and hence lies in $L^2(Vdx)$, we obtain that the  measure
$\phiv  V$ has finite energy integral with respect to the Dirichlet form $\calE_\Om$, i.e.,
\begin{eqnarray}
\int |f\phiv V|\,dx\leq \gamma(\calE[f])^{1/2},\ \forall\,f\in C_c^\infty(\Om),
\end{eqnarray}
and therefore  $K(V\phiv)\in\Wo$. Thus $u\in\Wo$ and satisfies the identity
\begin{eqnarray}
\calE(u,g)&=&\calE(\phiv,g)-\int\phiv gV\,dx
-\lamv\int\phiv g\,dx\nonumber\\
&=&\calE_{V}(\phiv,g)-\lamv\int\phiv g\,dx=0,\ \forall\,g\in\Wo.
\end{eqnarray}
Since $\calE$ is positive definite  we conclude that $u=0\, a.e.$, which yields the result.
\end{proof}

\begin{lema} Let $w$ be as in Theorem \ref{main1f2}. Then the  following inequality holds true
\begin{eqnarray}
w\geq C_0\phio\ q.e..
\end{eqnarray}
where  $$C_0=C_G\int\phio(y)S(y)w(y)+C_G\int\phio(y)F(y).$$
\label{w-lowerf2}
\end{lema}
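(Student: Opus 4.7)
The plan is to obtain a Green-function representation of $w$ analogous to Lemma \ref{ground-repf2}, and then to exploit the two-sided bound on $G$ coming from the intrinsic ultracontractivity of $(T_t)_{t>0}$.

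First, I would establish that $w$ satisfies the integral identity
\begin{eqnarray*}
w = K(Vw) + K(Sw) + K(F) \quad \text{a.e. on } \Omega.
\end{eqnarray*}
This follows exactly the pattern of the proof of Lemma \ref{formulaf2}. Because $w\in\Wo$, the form-boundedness assumption yields $\int w^2 V\,dx \le \kappa\,\calE[w]<\infty$; then by Cauchy--Schwarz and again the form-boundedness,
\begin{eqnarray*}
\int |f|\,w\,V\,dx \le \Big(\int f^2 V\,dx\Big)^{1/2}\Big(\int w^2 V\,dx\Big)^{1/2}\le \kappa\,\calE[w]^{1/2}\,\calE[f]^{1/2},
\end{eqnarray*}
so the measure $wV$ has finite energy integral with respect to $\calE$, and $K(Vw)\in\Wo$. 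Because $S$ and $F$ are essentially bounded and $\Omega$ is relatively compact, $Sw$ and $F$ lie in $L^2(\Omega,dx)$, hence $K(Sw),\ K(F)\in\Wo$. Thus $u:=w-K(Vw)-K(Sw)-K(F)$ belongs to $\Wo$, and using the defining identity $\calE_V(w,g)=\int Swg\,dx+\int Fg\,dx$ for $g\in\Wo$, a direct computation gives
\begin{eqnarray*}
\calE(u,g)=\calE_V(w,g)+\int Vwg\,dx-\int Swg\,dx-\int Fg\,dx=0,\quad\forall g\in\Wo,
\end{eqnarray*}
so $u=0$.

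Second, since $V\ge 0$, $w\ge 0$ q.e.\ and the Green kernel $G$ is nonnegative, we have $K(Vw)\ge 0$ q.e., hence
\begin{eqnarray*}
w(x)\ \ge\ K(Sw)(x)+K(F)(x)\ =\ \int_\Omega G(x,y)\bigl(S(y)w(y)+F(y)\bigr)\,dy.
\end{eqnarray*}
Finally I apply the IUC lower bound $G(x,y)\ge C_G\,\phio(x)\phio(y)$ recalled in the introduction, which gives
\begin{eqnarray*}
w(x)\ \ge\ C_G\,\phio(x)\int \phio(y)\bigl(S(y)w(y)+F(y)\bigr)\,dy\ =\ C_0\,\phio(x),
\end{eqnarray*}
and this is the desired inequality.

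The only nonroutine step is the first one: checking that each of $K(Vw)$, $K(Sw)$, $K(F)$ is well defined in $\Wo$ so that the identity $L_0 u=0$ with $u\in\Wo$ actually forces $u=0$. The bounded terms are immediate; for $Vw$ one must use the subcritical form-boundedness of $V$ to secure the finite-energy property, exactly as in Lemma \ref{ground-repf2}. Once the representation is in hand, the lower bound on $G$ furnished by the IUC of $(T_t)_{t>0}$ produces the constant $C_0$ automatically.
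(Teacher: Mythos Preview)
Your argument is correct and is precisely the paper's approach: obtain the Green representation $w-K(Vw)=K(Sw)+K(F)$ by the same finite-energy argument as in Lemma~\ref{formulaf2}, drop the nonnegative term $K(Vw)$, and apply the IUC lower bound $G\ge C_G\,\phio\otimes\phio$. (There is a harmless slip in your displayed line for $\calE(u,g)$: the term $+\int Vwg\,dx$ should not appear, since it cancels against the one coming from $\calE(w,g)=\calE_V(w,g)+\int Vwg\,dx$, leaving $\calE(u,g)=\calE_V(w,g)-\int Swg\,dx-\int Fg\,dx=0$.)
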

\begin{proof} As in the proof of Lemma \ref{formulaf2} we show that $w$ satisfies
$$
w-KVw=KSw+KF,
$$

We also recall the known fact that since $T_t=e^{-L_0t}$ is IUC (see\cite{kul}), then there is a finite
constant $C_G$, such that
\begin{equation}\label{lowerf2}
  G(x,y)\geq C_G \phio(x)\phio(y),
\end{equation}

yielding,
$$
w\geq KSw+KF\geq C_G\phio \int\phio(y)S(y)w(y)+C_G\phio \int\phio(y)F(y)\ q.e..
$$
and
$$C_0=C_G\int\phio(y)S(y)w(y)+C_G\int\phio(y)F(y)$$
\end{proof}

\begin{lema} We have,
\begin{equation}\label{s2f2}
 \int f^2\leq C_0C_H \cf\frac{}{}\il\il \frac{(f(x)-f(y))^2}{|x-y|^{n+\alpha}} w(x)w(y)\,dxdy+C_0C_H\lam_0
 \int {w}^2f^2,\, \forall\,  f\in D(Q^w).
\end{equation}
\label{L2-estimatef2}
\end{lema}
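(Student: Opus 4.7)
The plan is to combine the Hardy-type inequality (\ref{hardyf2}) with a ground-state representation for the free operator $L_0$, and then to use the pointwise lower bound $w\ge C_0\phio$ from Lemma~\ref{w-lowerf2} to transfer the $\phio$-weights on the right-hand side into the $w$-weights that appear in (\ref{s2f2}).

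First I would derive a ground-state representation for $L_0$ by specializing Lemma~\ref{closabilityf2} to the free setting: take $V=0$, $S=0$, $F=\lam_0\phio$ and let the function $w$ of that lemma be $\phio$ itself; this is legitimate since $\phio$ solves $L_0\phio=\lam_0\phio$. The identity (\ref{IDf2}) then reads
\begin{eqnarray*}
\calE[\phio f]=\cf\int\int\frac{(f(x)-f(y))^2}{|x-y|^{d+\alpha}}\phio(x)\phio(y)\,dx\,dy+\lam_0\int f^2\phio^2\,dx
\end{eqnarray*}
for every $f$ with $\phio f\in\Wo$. Next, applying the Hardy inequality (\ref{hardyf2}) to $\phio f$ gives
\begin{eqnarray*}
\int f^2\,dx=\int\frac{(\phio f)^2}{\phio^2}\,dx\le C_H\,\calE[\phio f],
\end{eqnarray*}
and plugging the above representation into the right-hand side yields
\begin{eqnarray*}
\int f^2\le C_H\,\cf\int\int\frac{(f(x)-f(y))^2}{|x-y|^{d+\alpha}}\phio(x)\phio(y)\,dx\,dy+C_H\lam_0\int f^2\phio^2\,dx.
\end{eqnarray*}
Substituting the pointwise bound $\phio\le w/C_0$ from Lemma~\ref{w-lowerf2} in both weight terms $\phio(x)\phio(y)$ and $\phio^2$ on the right-hand side then produces an inequality of exactly the form claimed in (\ref{s2f2}) (with a constant of type $C_H/C_0^2$, absorbed in the paper's $C_0C_H$).

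The main obstacle is a domain-of-validity issue: the identity for $\calE[\phio f]$ requires $\phio f\in\Wo$, while the target estimate must hold on $D(Q^w)=\{f:wf\in\Wo\}$, and since $w$ can be much larger than $\phio$ these two spaces need not coincide. I would handle this by density: for any $f\in C_c^\infty(\Om)$ both $\phio f$ and $wf$ belong to $\Wo$ (multiplication by a $C_c^\infty$ function preserves $W^{\alpha/2,2}$, and both $\phio$ and $w$ lie in $\Wo$), so the inequality already holds on $C_c^\infty(\Om)$; the regularity of $Q^w$ established in Lemma~\ref{closabilityf2} makes this subspace dense in $D(Q^w)$, and the continuity of both sides in the $Q^w$-norm propagates the inequality to all of $D(Q^w)$.
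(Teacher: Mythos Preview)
Your argument is essentially the paper's own proof: apply Hardy's inequality (\ref{hardyf2}) to $\phio f$, expand $\calE[\phio f]$ via the ground-state representation (the paper does this directly from the algebraic identity for $(f\phio(x)-f\phio(y))^2$ and the eigenvalue equation $\calE(\phio,f^2\phio)=\lam_0\int f^2\phio^2$ rather than by invoking Lemma~\ref{closabilityf2}, but the content is identical), and then replace $\phio$ by $w/C_0$ using Lemma~\ref{w-lowerf2}. The only differences are bookkeeping: where you justify the domain issue by density from $C_c^\infty(\Om)$, the paper simply asserts $D(Q^w)\subset D(Q^{\phio})$ (which is in fact immediate from $\phio\le w/C_0$ together with the explicit formula (\ref{IDf2}) for $Q^{\phio}$), and both routes produce the factor $C_H/C_0^2$ that the paper then records, somewhat loosely, as $C_0C_H$.
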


\begin{proof}
At this stage we use Hardy's inequality (\ref{hardyf2}), which states that there
is a constant $C_H>0$  such that
\begin{equation}\label{s3f2} \int\frac{u^{2}}{{\phio}^2}\,dx \leq C_H\cf\il\il
\frac{(u(x)-u(y))^2}{|x-y|^{n+\alpha}}\,dxdy,\, \forall\,u\in \Wo.
 \end{equation}

Let $f\in D(Q^w)\subset D(Q^\phio)$. Taking $ u=f\phio$ in inequality  (\ref{s3f2}) yields
\begin{eqnarray}\label{S3f2}
\int f^2&=&\int\frac{f^2{\phio}^2}{{\phio}^2}\leq
 C_H\cf\int\int \frac{(f\phio(x)-f\phio(y))^2}{|x-y|^{n+\alpha}} dxdy\nonumber\\
   &=& C_H\cf\il\il\frac{ (f(x)-f(y))^2}{|x-y|^{n+\alpha}}\phio(x)\phio(y)dxdy\nonumber\\&+&C_H\cf\int\int
   \frac{(\phio(x)-\phio(y)))({f^2\phio}(x)-f^2{\phio}(y))}{|x-y|^{n+\alpha}} dxdy.
\end{eqnarray}

Thanks to the fact that $\phio$ is an eigenfunction associated to $\lam_0$, we achieve
\begin{equation}\label{Ssf2}
\cf\int\int \frac{(\phio(x)-\phio(y))({f^2\phio}(x)-f^2{\phio}(y))}{|x-y|^{n+\alpha}}dxdy= \lam_0
\int f^2{\phio}^2 .
\end{equation}
Combining  (\ref{Ssf2}) with (\ref{S3f2}) we obtain
\begin{eqnarray}\label{psif2}
 \int f^2&\leq& C_H\cf\il\il\frac{ (f(x)-f(y))^2}{|x-y|^{n+\alpha}}\phio(x)\phio(y)dxdy\nonumber\\
 &+&C_H\lam_0\int{}(\phio)^2f^2,~~\, \forall\,f\in D(Q^w).
\end{eqnarray}
Having the lower bound for $w$ given by Lemma \ref{w-lowerf2} in hand, we establish
\begin{eqnarray}
\int f^2&\leq& C_HC_0\cf\int\int
\frac{(f(x)-f(y))^2}{|x-y|^{n+\alpha}}w(x)w(y)dxdy\nonumber\\&+&C_HC_0\lam_0\int {w}^{2}f^2,\,
\forall\,f\in D(Q^w).
\end{eqnarray}

\end{proof}

\begin{lema}  Set
\begin{eqnarray}
\Lam_1=1+\frac{C_HC_0}{2},\ \Lam_2=\frac{\|F\|_{\infty}^2}{2}+\frac{C_HC_0\lam_0}{2},
\end{eqnarray}
$C_0$ being the constant appearing in Lemma \ref{w-lowerf2}. Then
\begin{equation}\label{c1f2}Q^{w}[f]\leq\Lambda_1\cf\int\int
\frac{(f(x)-f(y))^2}{|x-y|^{n+\alpha}}w(x)w(y)dxdy+\Lambda_2\int w^2f^2
,\ \forall\,f\ D(Q^w).
\end{equation}
\end{lema}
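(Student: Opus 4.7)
The plan is to start from the explicit identity given in Lemma \ref{closabilityf2}, namely
\begin{equation*}
Q^w[f]=\cf\int\int\frac{(f(x)-f(y))^2}{|x-y|^{d+\alpha}}w(x)w(y)\,dx\,dy+\int f^{2}Fw\,dx,
\end{equation*}
and then to bound the lower-order term $\int f^{2}Fw\,dx$ by a combination of the Gagliardo-type double integral and $\int w^{2}f^{2}\,dx$, using exactly the constants provided. No work is needed on the double integral; everything happens in the $\int f^{2}Fw$ term.

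The key trick will be an asymmetric Young inequality, writing $Fwf^{2}=(\|F\|_{\infty}wf)\cdot f$ and applying $ab\le\tfrac{1}{2}a^{2}+\tfrac{1}{2}b^{2}$ pointwise. Since $F\ge 0$ and $F\le\|F\|_{\infty}$, this yields
\begin{equation*}
\int f^{2}Fw\,dx\;\le\;\frac{\|F\|_{\infty}^{2}}{2}\int w^{2}f^{2}\,dx+\frac{1}{2}\int f^{2}\,dx.
\end{equation*}
The asymmetric choice is essential: putting $\|F\|_{\infty}$ on the $wf$ factor produces a term in $w^{2}f^{2}$ (contributing to $\Lambda_{2}$) and a pure $\int f^{2}$ term that I will handle separately. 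The alternative grouping would give a $\int f^{2}$-weighted coefficient depending on $\|F\|_{\infty}$, which cannot be absorbed into the target inequality.

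Next I invoke Lemma \ref{L2-estimatef2}, which provides precisely the bound
\begin{equation*}
\int f^{2}\,dx\;\le\;C_{0}C_{H}\cf\int\int\frac{(f(x)-f(y))^{2}}{|x-y|^{d+\alpha}}w(x)w(y)\,dx\,dy+C_{0}C_{H}\lambda_{0}\int w^{2}f^{2}\,dx,
\end{equation*}
valid for every $f\in D(Q^{w})$. Substituting this back, the pure $\int f^{2}$ piece splits into a contribution of $\frac{C_{0}C_{H}}{2}$ in front of the double integral and $\frac{C_{0}C_{H}\lambda_{0}}{2}$ in front of $\int w^{2}f^{2}$. Adding the original factor $1$ from the identity of $Q^{w}$ to the first, and combining with the $\frac{\|F\|_{\infty}^{2}}{2}$ term for the second, produces exactly
\begin{equation*}
\Lambda_{1}=1+\frac{C_{H}C_{0}}{2},\qquad\Lambda_{2}=\frac{\|F\|_{\infty}^{2}}{2}+\frac{C_{H}C_{0}\lambda_{0}}{2},
\end{equation*}
and the inequality \eqref{c1f2} follows immediately.

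The main (and really only) obstacle is selecting the correct Young splitting so that the residual $L^{2}$ mass is the one controlled by Lemma \ref{L2-estimatef2}; once that choice is made, the rest is just bookkeeping of constants. No further estimate on $w$ or $F$ is needed beyond boundedness of $F$ and the lower bound $w\ge C_{0}\varphi_{0}$ that is already built into Lemma \ref{L2-estimatef2}.
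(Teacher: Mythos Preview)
Your proof is correct and follows essentially the same route as the paper: start from the identity of Lemma~\ref{closabilityf2}, bound $\int f^{2}Fw$ by $\frac{\|F\|_{\infty}^{2}}{2}\int w^{2}f^{2}+\frac{1}{2}\int f^{2}$, and then absorb $\int f^{2}$ via Lemma~\ref{L2-estimatef2}. The only cosmetic difference is that the paper reaches the intermediate bound via Cauchy--Schwarz on the integral followed by Young on the resulting product of norms, whereas you apply Young's inequality pointwise after using $F\le\|F\|_{\infty}$; both yield the identical splitting and the same constants $\Lambda_{1},\Lambda_{2}$.
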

\begin{proof}
We have already established that
\begin{eqnarray}
Q^w[f]=\cf\int\int\frac{(f(x)-f(y))^2}{|x-y|^{n+\alpha}}w(x)w(y)dxdy+\int f^2Fw,\ \forall\,f\in D(Q^w).
\end{eqnarray}
Making use of H\"older's and Young's inequality together with inequality (\ref{s2f2}) we obtain

\begin{eqnarray*}
Q^{w}[f]&\leq&\cf \il\il\frac{(f(x)-f(y))^2}{|x-y|^{n+\alpha}}w(x)w(y)dxdy+\big(\int f^2\big)^{\frac{1}{2}}
\big(\int f^2F^2w^2\big)^{\frac{1}{2}}\nonumber\\
&\leq&\Lam_1\cf\il\il\frac{ (f(x)-f(y))^2}{|x-y|^{n+\alpha}}w(x)w(y)dxdy+\Lam_2\int f^2w^2,\ \forall\,f\in
D(Q^w),
\end{eqnarray*}
which finishes the proof.

\end{proof}

\begin{proof} {\em of Theorem \ref{main1f2}}. We observe first that
\begin{eqnarray}
Q^{w}[f]+\int Sf^2w^2=\calE_{\V}^w[f]:=\calE_{\V}[wf], \forall\,f\in D(Q^w).
\end{eqnarray}

 By H\"older's inequality, we get for every $f\in D(Q^w)$,
\begin{eqnarray}\label{l31f2}
  \int_{}w^2f^{2(2-1/r)}&\leq& \big(\int w^{2r}f^{2r}\big)^{1/r}\big(\int f^2\big)^{1-1/r}
\end{eqnarray}

Using that $\calE$ and $\calE_V$ are equivalent, and by the Sobelev inequality \ref{IS0f2}, then there exists a
finite constant positive $C_S$ and $r:=\frac{d}{d-\alpha}>1$ such that
\begin{eqnarray}\label{l32f2}
 \big(\int g^{2r}\big)^{1/r}\leq C_S\calE_{\V}[g],~~ \hbox{for all}~ g\in\Wo  .
\end{eqnarray}

Taking $g=wf$, we have

\begin{eqnarray}\label{l322f2}
 \big(\int w^{2r}f^{2r}\big)^{1/r}\leq C_S\calE_{\V}^w[f]  .
\end{eqnarray}
On the other hand we have, according to   Lemma \ref{L2-estimatef2}
\begin{eqnarray}
\int f^2\leq C\big(\cf\il\il \frac{(f(x)-f(y))^2}{|x-y|^{n+\alpha}}w(x)w(y)dxdy
+\lam_0\int w^2f^2\big).
\end{eqnarray}

Applying another time H\"older's inequality we get
\begin{eqnarray}
\int(fw)^2\leq |\Om|^{1-1/r}\|(fw)^2\|_{L^{r}}\leq C_S|\Om|^{1-1/r}
\calE_{\V}^w[f],\ \forall f\in D(Q^w).
\label{holderf2}
\end{eqnarray}
Recalling that $\calE_{\V}^w[f]\geq \cf\il\il \frac{(f(x)-f(y))^2}{|x-y|^{n+\alpha}}w(x)w(y)dxdy,$ we
achieve
\begin{eqnarray}\label{l33f2}
\int f^2\,\leq C_HC_0\big(1+\lam_0C_S|\Om|^{1-1/r}\big)\calE_{\V}^w[f] ,\ \forall f\in D(Q^w).
\end{eqnarray}
 Combining (\ref{l31f2}), (\ref{l32f2}) and (\ref{l33f2}), we get $(IS1).$

\end{proof}

For every $t>0$ we designate by $T_t^w$ the semigroup associated to the form $Q^w$ in the space
$L^2(w^2dx)$. We are yet ready to prove the ultracontractivity of   $T_t^w$.\\
Set
\begin{eqnarray}
s:=\frac{2}{q-1}:=\frac{2r}{r-1}.
\end{eqnarray}

\begin{theo}   Then $T_t^w$ is ultracontractive for every $t>0$ and there exists $C_1>0$ depends only on $A$
and $s$ such that
\begin{eqnarray}
\|T_t^w\|_{L^1(w^2dx),L^{\infty}}\leq C_1 t^{-s/2}e^{\|S\|_{\infty}t},\ \forall t>0.
\end{eqnarray}
\label{UCf2}
\end{theo}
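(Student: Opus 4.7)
The plan is to follow the classical Nash moment method, adapted to the weighted setting of $L^2(w^2\,dx)$. The Sobolev-type inequality (IS1) reads $\|f\|_{L^{2q}(w^2dx)}^2 \leq A\,\tilde Q[f]$ with $\tilde Q[f] := Q^w[f] + \int Sf^2 w^2\,dx$. The first step is to convert this into a Nash-type inequality by H\"older interpolation between $L^1(w^2dx)$ and $L^{2q}(w^2dx)$. Writing $1/2 = \theta + (1-\theta)/(2q)$ with $\theta = (q-1)/(2q-1)$, one arrives at an inequality of the shape
$$\|f\|_{L^2(w^2dx)}^{2+4/s} \leq A'\,\|f\|_{L^1(w^2dx)}^{4/s}\,\tilde Q[f],$$
with $s$ exactly the exponent defined in the statement.

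To absorb the potential term $S$, I next pass to the rescaled semigroup $\hat T_t := e^{-\|S\|_\infty t}\,T_t^w$. Since $\tilde Q[f] \leq Q^w[f] + \|S\|_\infty \|f\|_{L^2(w^2dx)}^2 =: \hat Q[f]$, the Nash inequality continues to hold with $\hat Q$ on the right-hand side. The form $\hat Q$ is associated with the generator $L^w + \|S\|_\infty I$, whose semigroup is precisely $\hat T_t$. Because $Q^w$ is a regular Dirichlet form by Lemma \ref{closabilityf2}, the semigroup $T_t^w$ is sub-Markovian, and the gauge factor $e^{-\|S\|_\infty t}$ then makes $\hat T_t$ a contraction on $L^p(w^2 dx)$ for every $p \in [1,\infty]$.

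Now the standard Nash ODE argument applies. For $0 \leq f \in L^1 \cap L^2(w^2dx)$ and $y(t) := \|\hat T_t f\|_{L^2(w^2dx)}^2$, we have $y'(t) = -2\,\hat Q[\hat T_t f]$. Combining the Nash inequality with $\|\hat T_t f\|_{L^1(w^2dx)} \leq \|f\|_{L^1(w^2dx)}$ yields a Bernoulli-type differential inequality
$$ y'(t) \leq -c\, y(t)^{1+2/s}\,\|f\|_{L^1(w^2dx)}^{-4/s}, $$
whose explicit integration gives $\|\hat T_t f\|_{L^2(w^2dx)}^2 \leq C\,t^{-s/2}\,\|f\|_{L^1(w^2dx)}^2$. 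Self-adjointness of $\hat T_t$ on $L^2(w^2dx)$ yields the dual bound $\|\hat T_t\|_{L^2(w^2dx)\to L^\infty}^2 \leq C\,t^{-s/2}$, and composing with the $L^1\!\to\!L^2$ bound at time $t/2$ gives $\|\hat T_t\|_{L^1(w^2dx)\to L^\infty} \leq C_1\,t^{-s/2}$. Undoing the gauge via $T_t^w = e^{\|S\|_\infty t}\hat T_t$ produces the claimed estimate.

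The main technical obstacle is carrying out the H\"older interpolation so that the resulting Nash exponent lines up with the $s$ appearing in the statement, and verifying that $\hat T_t$ retains the sub-Markovian structure needed to control the $L^1$-norm along the flow in the ODE step. Both points reduce to the Dirichlet-form property of $Q^w$ established in Lemma \ref{closabilityf2}, together with the elementary observation that a non-negative bounded shift of the generator preserves (and even improves) the contractive properties of the associated semigroup.
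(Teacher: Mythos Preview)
Your proof follows the same route as the paper's: both use the Sobolev-type inequality (IS1) together with the Dirichlet-form property of $Q^w$ from Lemma~\ref{closabilityf2} to invoke the standard implication ``Sobolev embedding $\Rightarrow$ ultracontractivity'' for Dirichlet semigroups; the paper simply cites \cite[Theorems~4.1.2,~4.1.3]{saloff-coste}, whereas you unwind the Nash moment argument behind that citation and absorb the zero-order term via the gauge $\hat T_t = e^{-\|S\|_\infty t}T_t^w$.

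One small caution on the arithmetic: interpolating $\|f\|_{L^{2q}(w^2dx)}^2 \le A\,\tilde Q[f]$ between $L^1$ and $L^{2q}$ with $\theta=(q-1)/(2q-1)$ produces a Nash inequality with $4/n = 2\theta/(1-\theta) = 2(q-1)/q$, i.e.\ $n = 2q/(q-1)$, not $4/s = 2(q-1)$ as you wrote. This does not affect the validity of the method (ultracontractivity still follows with a polynomial rate), but the power of $t$ you obtain is $t^{-q/(q-1)}$ rather than $t^{-1/(q-1)} = t^{-s/2}$, so you should revisit the identification of your Nash exponent with the paper's $s$.
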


\begin{proof}From Theorem \ref{main1f2}, we derive
\begin{eqnarray}
\parallel f^2\parallel_{L^{r}(w^2dx)}\leq A\big(Q^{w}[f]+\|S\|_{\infty}\int_{\Om}f^2w^2\,dx\big),\
\forall\,f\in D(Q^w).
\end{eqnarray}

  Since $Q^w$ is a Dirichlet form, it is known that a Sobolev embedding for the domain of a Dirichlet form
  yields the ultracontractivity of the related semigroup ( see \cite[Theorems 4.1.2,4.1.3]{saloff-coste}),
  which ends the proof.

\end{proof}

We shall apply Theorem \ref{main1f2}, to the special cases $V=0 , F=1$

\begin{theo} Let $\xiv:=L_V^{-1}1$, then
\begin{eqnarray}
\phiv\leq C(\V,t)\xi^{\V} ,\ a.e.\ \forall\,t>0,
\end{eqnarray}
where
\begin{eqnarray}
C(\V,t):=C_1 t^{-s/2}e^{t\lam_0}\, \forall\,t>0.
\end{eqnarray}
\label{comparison1f2}
\end{theo}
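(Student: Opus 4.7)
The plan is to apply Theorem \ref{UCf2} with the auxiliary data $S \equiv 0$ and $F \equiv 1$ (this is almost certainly the intended reading of ``$V=0,\, F=1$'' in the statement, since $V$ is fixed throughout Section~3). With this choice, the function $w := \xiv = L_V^{-1} 1$ trivially satisfies the equation $L_V w = Sw + F$, so it is admissible as the reference function in Lemma \ref{closabilityf2} and Theorem \ref{main1f2}.

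Before invoking those results, I would verify the three prerequisites on $w$: (i) $w \in \Wo$, which is immediate because $1 \in L^2(\Om)$ and $L_V$ has a bounded inverse on $L^2(\Om)$; (ii) $w > 0$ q.e., which follows from the strict positivity of the Green's kernel $G^V$, itself a consequence of the irreducibility of $e^{-tL_V}$ and the lower bound $G^V \geq G$; and (iii) $w \in L^\infty(\Om)$, which requires a Moser-type iteration based on the Sobolev inequality \eqref{IS0f2} and the equivalence of $\calE_V$ with $\calE$. Once these are in hand, Theorem \ref{UCf2} delivers
\begin{equation*}
\|T_t^w\|_{L^1(w^2dx) \to L^\infty(\Om)} \leq C_1 t^{-s/2}, \qquad t > 0,
\end{equation*}
because $\|S\|_\infty = 0$.

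The decisive step is to exploit the intertwining $T_t^w = w^{-1} e^{-tL_V} w$ (which is available because $S=0$). Since $L_V \phiv = \lamv \phiv$, the quotient $\phiv/w$ is a pointwise eigenfunction of $T_t^w$ with eigenvalue $e^{-t\lamv}$. Applying the ultracontractivity bound to $\phiv/w$ yields
\begin{equation*}
e^{-t\lamv}\,\Bigl\| \tfrac{\phiv}{w} \Bigr\|_\infty
= \|T_t^w(\phiv/w)\|_\infty
\leq C_1 t^{-s/2} \int_\Om \phiv\, w\, dx,
\end{equation*}
and the integral on the right equals $\lamv^{-1}\|\phiv\|_{L^1(\Om)}$ by self-adjointness of $L_V^{-1}$, hence is finite. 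Rearranging and using $\lamv \leq \lam_0$ (valid because $V \geq 0$) gives $\phiv \leq C(V,t)\, \xiv$ a.e.\ with $C(V,t)$ of the announced form, the constant $\|\phiv\|_1/\lamv$ being absorbed into $C_1$.

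The main obstacle I expect is prerequisite (iii), the essential boundedness of $\xiv$; this is the only non-soft ingredient and requires running a Moser iteration against the Sobolev inequality for $\calE_V$. The positivity and $\Wo$-membership of $\xiv$, together with the intertwining manipulation, are then routine.
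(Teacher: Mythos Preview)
Your approach is correct and is essentially the paper's own: specialize to $w=\xiv$ (i.e.\ $S\equiv 0$, $F\equiv 1$), invoke the ultracontractivity of $T_t^{\xiv}$ from Theorem~\ref{UCf2}, and use that $\phiv/\xiv$ is an eigenfunction of $T_t^{\xiv}$. Two small points: your prerequisite (iii) is unnecessary, since nowhere in Lemma~\ref{closabilityf2} or Theorems~\ref{main1f2}--\ref{UCf2} is $w$ assumed bounded (only $S$ and $F$ are), so the Moser iteration you flag as the ``main obstacle'' can simply be dropped; and the paper applies the $L^2((\xiv)^2dx)\to L^\infty$ bound rather than the $L^1\to L^\infty$ one, which is marginally cleaner because $\|\phiv/\xiv\|_{L^2((\xiv)^2dx)}=\|\phiv\|_{L^2}=1$ directly, avoiding the side computation of $\int\phiv\,\xiv\,dx$.
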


\begin{proof}   Applying Theorem \ref{main1f2} to the case $V=0, F=1$, we get $w=\xiv$, and it yields that the
semi-group $T_t^{\xiv}$ is ultracontractive and $\frac{\phiv}{\xiv}$ is an eigenfunction for $T_t^{\xiv}$
associated to the eigenvalue $e^{-t\lam_0},\ \forall\,t>0$. Thus
\begin{eqnarray}
\|\frac{\phiv}{\xiv}\|_{\infty}&\leq& e^{t\lam_0}\|T_t^{\xiv}\|_{L^2((\xiv)^2dm),L^{\infty}}\nonumber\\
& &\leq C_1 t^{-s/2}e^{t\lam_0},\ \forall t>0,
\end{eqnarray}
and
\begin{eqnarray}
\phiv\leq C_1 t^{-s/2}e^{t\lam_0}\xiv,\ a.e.\ \forall\,t>0,
\end{eqnarray}
which was to be proved.
\end{proof}

While for the upper pointwise estimate we exploited the idea of intrinsic ultracontractivity, for the
reversed estimate
we shall however, make use of Moser's iteration technique.

\begin{theo} For every $t>0$, the  following estimate holds true

\begin{equation}\label{GDf2}
\xiv\leq (A C_0 C_H C^2(V,t)+2\lamv)\phiv,\ a.e.,
\end{equation}
where
\begin{eqnarray}
C(\V,t):=C_1 t^{-s/2}e^{t\lam_0}\, \forall\,t>0.
\end{eqnarray}
\label{comparison2f2}
\end{theo}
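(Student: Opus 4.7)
The plan is to run a Moser iteration on $v := \xiv/\phiv$ in the weighted space $L^2(\Om, \phiv^2\,dx)$, driven by the improved Sobolev inequality (IS1) of Theorem \ref{main1f2} and complemented by the already-established upper bound $\phiv \leq C(V,t)\xiv$ from Theorem \ref{comparison1f2}. First I would apply Lemma \ref{closabilityf2} and Theorem \ref{main1f2} with the choice $w = \phiv$, $S = \lamv$, $F = 0$; this is legitimate because $L_V\phiv = \lamv\phiv$. One obtains thereby the Doob-transformed Dirichlet form $Q^{\phiv}$ whose generator $L^{\phiv} := \phiv^{-1}(L_V - \lamv)\phiv$ annihilates the constant function $1$. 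A direct computation using $L_V\xiv = 1$ shows that $v$ is a weak solution of
$$(L^{\phiv} + \lamv)\,v = 1/\phiv,$$
that is, $Q^{\phiv}(v,g) + \lamv\int vg\phiv^2\,dx = \int g\phiv\,dx$ for every admissible test function $g \in D(Q^{\phiv})$.

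The iteration proper tests this weak identity with $g = v^{2p-1}$ for $p \geq 1$ (truncated as needed to ensure form-domain membership) and invokes the standard nonlocal convexity inequality $(a-b)(a^{2p-1}-b^{2p-1}) \geq \tfrac{4(2p-1)}{(2p)^2}(a^p - b^p)^2$ to derive
$$\tfrac{4(2p-1)}{(2p)^2}\,Q^{\phiv}[v^p] + \lamv\int v^{2p}\phiv^2\,dx \leq \int v^{2p-1}\phiv\,dx.$$
Applying (IS1) to $f = v^p$ upgrades the quadratic-form term on the left to an $L^q(\phiv^2\,dx)$ norm of $v^{2p}$, providing the crucial gain of integrability needed to close the induction. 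The lower-order $L^2$ piece is controlled by Lemma \ref{L2-estimatef2} applied to $v^p$; it is at this step that the pointwise bound $\phiv^2 \leq C(V,t)^2\,\xiv^2$ is inserted, producing the factor $C^2(V,t)$ in the constant. The source integral $\int v^{2p-1}\phiv\,dx$ is then split via H\"older's inequality into a piece absorbed by the higher-$L^p$ side and a remainder whose evaluation through the ground-state representation of Lemma \ref{formulaf2} contributes the additive $2\lamv$ term.

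Iterating along $p_k := q^k$, $k = 0,1,2,\ldots$, starting from the base norm $\|v\|_{L^2(\phiv^2\,dx)} = \|\xiv\|_{L^2(\Om)} < \infty$ (which is finite because $\xiv \in D(\calE_V)$), the product of the resulting recursion constants telescopes in the limit $k\to\infty$ to the announced value $AC_0C_HC^2(V,t) + 2\lamv$. This yields $\|v\|_\infty \leq AC_0C_HC^2(V,t) + 2\lamv$, which rearranges to the stated pointwise inequality $\xiv \leq (AC_0C_HC^2(V,t) + 2\lamv)\phiv$ a.e.

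The main technical obstacle is the source term $1/\phiv$ on the right-hand side of the equation for $v$: it is not in $L^\infty$, since $\phiv$ typically vanishes at $\partial\Om$. The H\"older split must therefore be executed using the already-known lower bound $v \geq 1/C(V,t)$ to rewrite $1/\phiv$ as a controlled multiple of $v/\xiv$; this is precisely what keeps the recursion constants summable as $p\to\infty$ and, crucially, makes their accumulated product equal to $AC_0C_HC^2(V,t) + 2\lamv$ rather than diverging.
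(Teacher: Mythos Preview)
Your strategy is the paper's: Doob-transform with $w=\phiv$, $S=\lamv$, $F=0$, test the weak equation for $\rho=\xiv/\phiv$ against $\rho^{2j-1}$, use the elementary inequality $(a^j-b^j)^2\le j(a-b)(a^{2j-1}-b^{2j-1})$ to pass to $Q^{\phiv}[\rho^j]$, invoke Theorem~\ref{comparison1f2} in the form $1\le C(V,t)\rho$ on the source term, then H\"older plus Lemma~\ref{L2-estimatef2} and Young's inequality, and finally feed the result into (IS1) to get the iterable bound. The constants line up exactly as you describe.

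The one place where the paper does substantially more than your phrase ``truncated as needed'' is the justification that $\rho^{2j-1}$ is an admissible test function. The paper runs a \emph{two-level} approximation: first it assumes $V\in L^\infty(\Om)$ and exhausts $\Om$ by the smooth subdomains $\Om_\epsilon=\{x:\operatorname{dist}(x,\partial\Om)>\epsilon\}$, so that $\xiv_\epsilon\in W^{\alpha/2,2}(\Om_\epsilon)\cap L^\infty(\Om_\epsilon)$ and $\phiv\ge C_\epsilon>0$ on $\Om_\epsilon$, which makes $\rho_\epsilon=\xiv_\epsilon/\phiv$ bounded and hence $\rho_\epsilon^{2j-1}$ a legitimate test function; one then lets $\epsilon\to 0$ by monotone convergence. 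Second, for general $V\in L^1_{\rm loc}$ the paper truncates $V_k=\min(V,k)$, applies the bounded-potential lemma to each $L_k$, and passes to the limit using the uniform resolvent convergence $L_k\to L_V$ (so that $\lam_0^{(k)}\to\lamv$, $\varp_0^{(k)}\to\phiv$, $\xi_0^{(k)}\to\xiv$ in $L^2$) together with the stability of the constant $M(V_k,t)\to M(V,t)$. Your outline does not separate these two layers, and in particular the passage from bounded $V$ to $L^1_{\rm loc}$ potential is not a mere truncation of the test function---it is an approximation of the operator itself. This is where your write-up would need the most added detail to be complete.
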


For the proof we establish the following lemma:
\begin{lema}

Assume that $V\in L^\infty(\Om)$, then (\ref{GDf2}) holds true
\end{lema}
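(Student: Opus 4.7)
The goal is to bound $u:=\xiv/\phiv$ in $L^\infty(\Om)$, using Moser's iteration in the $\phiv$-transformed Dirichlet form.

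First I would set $w=\phiv$ in the general framework of Section~3: the eigenvalue equation $L_V\phiv=\lamv\phiv$ corresponds to the choice $S=\lamv$, $F=0$ in Lemma~\ref{closabilityf2}, so $Q^{\phiv}$ is a regular Dirichlet form on $L^2(\Om,(\phiv)^2\,dx)$ and Theorem~\ref{main1f2} provides the Sobolev inequality
\begin{eqnarray*}
\|f^2\|_{L^q((\phiv)^2 dx)} \leq A\Bigl(Q^{\phiv}[f] + \lamv\int f^2(\phiv)^2\,dx\Bigr),\quad f\in D(Q^{\phiv}).
\end{eqnarray*}
Since $V\in L^\infty(\Om)$, the function $\xiv$ is bounded, hence $u\in L^p((\phiv)^2dx)$ for every finite $p$, and we may legitimately manipulate powers of $u$ in the form $Q^{\phiv}$.

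Next I would derive the weak equation for $u$. Starting from $L_V\xiv=1$ and applying the Doob transform by $\phiv$ -- that is, rewriting $\calE_V(\xiv,\phiv g)=\int \phiv g\,dx$ for $g\in D(Q^{\phiv})$ and reading off the definition of $Q^{\phiv}$ -- one obtains
\begin{eqnarray*}
Q^{\phiv}(u,g)+\lamv\int u g(\phiv)^2\,dx \;=\; \int g\,\phiv\,dx,\qquad g\in D(Q^{\phiv}).
\end{eqnarray*}
Note that the right-hand side is $\int g\,\phiv\,dx$ rather than the singular $\int g/\phiv\,(\phiv)^2 dx$ one would naively expect; this is what makes the iteration feasible.

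With the Sobolev inequality and the transformed equation in hand, Moser's scheme proceeds along the standard lines: substitute $g=u^{2p-1}$ (with truncations $u_k=\min(u,k)$ to justify integrability, then let $k\to\infty$), invoke the Dirichlet-form Leibniz-type inequality
$Q^{\phiv}(u,u^{2p-1})\geq \frac{2p-1}{p^2}\,Q^{\phiv}[u^p]$
valid for $u\geq 0$ in any symmetric Dirichlet form, and combine with the Sobolev inequality above to produce a reverse-H\"older step of the type $\|u^p\|_{L^q((\phiv)^2 dx)}^{2}\leq K(p)\bigl(\|u\|_{L^{2p-1}(\phiv\,dx)}^{2p-1}+\|u\|_{L^{2p}((\phiv)^2 dx)}^{2p}\bigr)$. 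Iterating along the geometric sequence $p_n=p_0(q/2)^n$ and passing to the limit yields $u\in L^\infty(\Om)$; the seed of the iteration is controlled via Theorem~\ref{comparison1f2} (this is where the squared factor $C^2(V,t)$ enters, since the upper bound is applied at the base level and propagated through the iteration), and the accumulation of multiplicative constants collapses to $AC_0C_HC^2(V,t)$, while the lower-order $\lamv$-term in the Sobolev inequality produces the additive summand $2\lamv$.

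The main obstacle is the execution of Moser's iteration in the nonlocal setting: the classical chain rule must be replaced by the Dirichlet-form Leibniz inequality above, and the constants at each stage must be tracked carefully -- across both the kernel weight $\phiv(x)\phiv(y)$ and the $\lamv$-shift -- so that the final bound collapses precisely to $AC_0C_HC^2(V,t)+2\lamv$ rather than to a more complicated polynomial expression. The boundedness hypothesis $V\in L^\infty$ is used to guarantee that the truncation argument closes and that $u$ is \emph{a priori} in every $L^p((\phiv)^2dx)$, providing the starting point for the iteration.
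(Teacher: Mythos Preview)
Your overall strategy is the same as the paper's: Moser iteration in the $\phiv$-transformed Dirichlet form $Q^{\phiv}$ (choice $S=\lamv$, $F=0$), using the Sobolev inequality (IS1) from Theorem~\ref{main1f2}, the $L^2$-estimate of Lemma~\ref{L2-estimatef2}, and Theorem~\ref{comparison1f2} to control the source term $\int g\,\phiv$; the constant $ACC^2(V,t)+2\lamv$ arises in both arguments from exactly the same chain (H\"older $\to$ Lemma~\ref{L2-estimatef2} $\to$ Young $\to$ (IS1)).

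There is, however, a genuine slip in your a~priori input. You assert that $V\in L^\infty$ implies $\xiv$ bounded, ``hence $u\in L^p((\phiv)^2dx)$ for every finite $p$.'' This does not follow: $\int u^p(\phiv)^2 = \int (\xiv)^p(\phiv)^{2-p}$, and since $\phiv$ vanishes at $\partial\Om$ like $\delta^{\alpha/2}$, the factor $(\phiv)^{2-p}$ is \emph{not} integrable for large $p$. So you cannot start the iteration from ``$u$ is in every weighted $L^p$''; nor is it automatic that $u^{2p-1}\in D(Q^{\phiv})$, i.e.\ that $\phiv u^{2p-1}\in\Wo$. Your parenthetical truncation $u_k=\min(u,k)$ is the right repair, but then you must (i) check $u_k\in D(Q^{\phiv})\cap L^\infty$ (indeed $\phiv u_k=\min(\xiv,k\phiv)$), (ii) handle the cross term $Q^{\phiv}(u,u_k^{2p-1})$ versus $Q^{\phiv}(u_k,u_k^{2p-1})$ in the nonlocal setting, and (iii) pass to the limit $k\to\infty$. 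None of this is done by the a~priori $L^p$ claim.

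The paper avoids this boundary issue by a different regularization: it exhausts $\Om$ by subdomains $\Om_\epsilon=\{x:\mathrm{dist}(x,\partial\Om)>\epsilon\}$, solves $L_V\xi^V_\epsilon=1$ on $\Om_\epsilon$, and sets $\rho_\epsilon=\xi^V_\epsilon/\phiv$. On $\Om_\epsilon$ one has $\phiv\geq C_\epsilon>0$, so $\rho_\epsilon\in W^{\alpha/2,2}(\Om_\epsilon)\cap L^\infty(\Om_\epsilon)$, and $\rho_\epsilon^{2j-1}$ is a legitimate test function without further truncation. The iteration inequality is derived for $\rho_\epsilon$ and then $\epsilon\to 0$ by monotone convergence. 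Your Leibniz inequality $Q^{\phiv}(u,u^{2p-1})\geq \frac{2p-1}{p^2}Q^{\phiv}[u^p]$ corresponds to the paper's elementary inequality $(a^j-b^j)^2\leq j\,(a-b)(a^{2j-1}-b^{2j-1})$, so the cores of the two iterations coincide; the substantive difference is only in how admissibility of the test functions is secured.
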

\begin{proof}
\textbf{Step1: Iteration formula}

          We claim that, there exists $q>1$ such that for all $j\geq1$, we have
           \begin{equation}\label{iteratef2}
(\int \rho_{}^{2jq}(\varphi_{0}^{\V})^{2}\,dx)^{\frac{1}{q}}\leq
 (ACC^2(\V,t)+2\lamv) j^2\int_{}\rho_{}^{2j}(\varphi_{0}^{\V})^{2}\,dx.\\
\end{equation}

~~

Consider the family of smooth domains

$$\Om_{\epsilon}=\{x\in\Om / dist(x,\partial\Om) > {\epsilon}\}.$$
Let $\xiv_{\epsilon}\in \Wa(\Om_{\epsilon})$ be the solution of $L_V\xiv_{\epsilon}=1$ in $\Om_{\epsilon}$.
Since $V\in L^{\infty}$, then by \cite[Lemma 4.4]{benamor-osaka}, the function $\xiv_{\epsilon}\in
\Wa(\Om_{\epsilon})\cap L^{\infty}(\Om_{\epsilon})$, increasing   and convergent. We assume that
$\xiv_{\epsilon}\nearrow u$ as $\epsilon\rightarrow0$ ( uniformly, $\xiv_{\epsilon}\in
C^{\alpha/2}(\overline{\Om_\epsilon}$)).   On the other hand, we
have $\xiv_{\epsilon}=K(V\xiv_{\epsilon})+K(1_{\Om_{\epsilon}})$ converge to $u=K(Vu)+K(1)$, thus $L_V u=1$
and by  unicity of solution we have
$u=\xiv$.

 Letting
\begin{eqnarray}
\rho_{\epsilon}:=\frac{\xiv_{\epsilon}}{\phiv}.
\end{eqnarray}

 Since $\phiv>0$ in $\Om$  there exists  $C_{\epsilon}>0$ such that $\phiv>C_{\epsilon}$ in
 $\Om_{\epsilon}$,
 it follows that $\rho_{\epsilon}\in\Wa(\Om_{\epsilon})\cap L^{\infty}(\Om_{\epsilon})$ and
 $$\rho_{\epsilon}\nearrow\rho:=\frac{\xiv}{\phiv}.$$

  Now  using
\begin{eqnarray*}
  w(x)w(y)\big(\frac{g_1(x)}{w(x)}-\frac{g_1(y)}{w(y)}\big)\big(\frac{g_2(x)}{w(x)}-\frac{g_2(y)}{w(y)}\big)=
  (g_1(x)-g_1(y))(g_2(x)-g_2(y)) \\
   -(w(x)-w(y))\big[\frac{g_1(x)g_2(x)}{w(x)}-\frac{g_1(x)g_2(x)}{w(x)}\big],
\end{eqnarray*}
with the equations satisfied by the ground state $\phiv$ and the function $\xiv_{\epsilon}$,
setting $g_1=\phiv f,$ $g_2=\xiv_{\epsilon}$ and $w=\phiv$
, we find,  for every $f\in \Wa(\Om_{\epsilon})\cap L^{\infty}(\Om_{\epsilon})$

\begin{eqnarray}
\cf\il_{}\il_{}\frac{(f(x)-f(y))(\rho_{\epsilon}(x)-\rho_{\epsilon}(y))}{|x-y|^{d+\alpha}}\phiv(x)\phiv(y)\,dx\,dy&=~~~~~~~\nonumber\\
\cf\il_{}\il_{}\frac{(\phiv f(x)-\phiv
f(y))(\xiv_{\epsilon}(x)-\xiv_{\epsilon}(y))}{|x-y|^{d+\alpha}}\,dx\,dy&\nonumber\\
-\cf\il_{}\il_{}\frac{(\xiv_{\epsilon}
f(x)-\xiv_{\epsilon}f(y))(\phiv(x)-\phiv(y))}{|x-y|^{d+\alpha}}\,dx\,dy&=\nonumber\\
\calA{(d,\alpha)}\il_{}{\phiv f(x)}\big(\lim_{\epsilon'\rightarrow0}
\il_{\{|x-y|>\epsilon'\}}\frac{\xiv_{\epsilon}(x)-\xiv_{\epsilon}(y)}{|x-y|^{d+\alpha}}\,dy\big)\,dx -\il f
\xiv_{\epsilon}\phiv V(x)\,dx& \nonumber\\
-\bigg(\calA{(d,\alpha)}\il_{}{\xiv_{\epsilon} f(x)}\big(\lim_{\epsilon'\rightarrow0}
\il_{\{|x-y|>\epsilon'\}}\frac{\phiv(x)-\phiv(y)}{|x-y|^{d+\alpha}}\,dy\big)\,dx -\il f \xiv_{\epsilon}\phiv
V(x)\,dx \bigg)&= \nonumber\\
\il_{}{\phiv f(x)}\,dx-\lamv \il_{}{\xiv_{\epsilon}\phiv f(x)}\,dx .&
\end{eqnarray}

 Testing the latter  equation with $f=\rho_{\epsilon}^{2j-1}$, $j\geq 1$, ($f\in \Wa(\Om_\epsilon)\cap
 L^{\infty}(\Om_\epsilon)$
 ),  we deduce

\begin{eqnarray}\label{Eq1f2}
\cf\il_{}\il_{}\frac{(\rho_{\epsilon}^{2j-1}(x)-\rho_{\epsilon}^{2j-1}(y))(\rho_{\epsilon}(x)-\rho_{\epsilon}(y))}{|x-y|^{d+\alpha}}\phiv(x)\phiv(y)\,dx\,dy=
\nonumber\\\il_{}\rho_{\epsilon}^{2j-1}(x)({\phiv(x) }\,dx-\lamv{\xiv_{\epsilon}\phiv (x)})\,dx .
\end{eqnarray}

Using that for all $a,b\geq0 $  and $j\geq1$ we have
\begin{eqnarray}
(a^j-b^j)^2&:=&(a-b)^2(a^{j-1}+a^{j-2}b^{}+a^{j-3}b^{2}+......... +b^{j-1})^2 \nonumber\\
&\leq& j(a-b)^2(a^{2j-2}+a^{2j-4}b^{2}+a^{2j-6}b^{4}+......... +b^{2j-2})\nonumber\\
&\leq& j(a-b)^2(a^{2j-2}+a^{2j-3}b^{}+a^{2j-4}b^{2}+......... +b^{2j-2})\nonumber\\
&=& j(a-b)(a^{2j-1}-b^{2j-1})\label{Eq2f2}
\end{eqnarray}

which yields, using (\ref{Eq1f2}) and (\ref{Eq2f2})

\begin{eqnarray}\cf\il_{}\il_{}\frac{(\rho_{\epsilon}^{j}(x)-\rho_{\epsilon}^{j}(y))^2}{|x-y|^{d+\alpha}}\phiv(x)\phiv(y)\,dx\,dy
\leq \nonumber\\j\il_{}\rho_{\epsilon}^{2j-1}(x)({\phiv(x) }\,dx-\lamv{\xiv_{\epsilon}\phiv (x)})\,dx \leq
j\il{}\rho_{\epsilon}^{2j-1}(x)\phiv(x)\,dx.
\end{eqnarray}

According to Theorem \ref{comparison1f2}, we obtain

\begin{equation}\label{GD1f2}
 \cf\il_{}\il_{}\frac{(\rho_{\epsilon}^{j}(x)-\rho_{\epsilon}^{j}(y))^2}{|x-y|^{d+\alpha}}\phiv(x)\phiv(y)\,dx\,dy\leq
  C(\V,t)j\il{}\rho_{\epsilon}^{2j-1}(x)\rho\phiv(x)\,dx.
\end{equation}

Using H\"older inequality and Lemma \ref{L2-estimatef2} ( with $S=\lamv, F=0, w=\phiv$ and
$f=\rho_{\epsilon}^j$), it follows from (\ref{GD1f2})
that
\begin{eqnarray}
 \cf\il_{}\il_{}\frac{(\rho_{\epsilon}^{j}(x)-\rho_{\epsilon}^{j}(y))^2}{|x-y|^{d+\alpha}}\phiv(x)\phiv(y)\,dx\,dy\leq
 \nonumber\\
C(\V,t)j\big(\il{}(\phiv)^{2}\rho_{\epsilon}^{2j-2}\rho^2\,dx\big)^{\frac{1}{2}}(\il{}\rho_{\epsilon}^{2j}\,dx)^{\frac{1}{2}}\leq
\nonumber\\
C^{1/2}C(\V,t)j\big(\il_{}(\phiv)^{2}\rho_{\epsilon}^{2j-2}\rho^2\,dx\big)^{\frac{1}{2}}
\nonumber\\\times\bigg(\cf\il_{}\il_{}\frac{(\rho_{\epsilon}^{j}(x)-\rho_{\epsilon}^{j}(y))^2}{|x-y|^{d+\alpha}}\phiv(x)\phiv(y)\,dx\,dy+
\lamv\il{}(\phiv)^{2}\rho_{\epsilon}^{2j}\,dx\bigg)^{\frac{1}{2}}.
\end{eqnarray}

By Young's inequality, we obtain
\begin{eqnarray}
 \cf\il_{}\il_{}\frac{(\rho_{\epsilon}^{j}(x)-\rho_{\epsilon}^{j}(y))^2}{|x-y|^{d+\alpha}}\phiv(x)\phiv(y)\,dx\,dy\leq
 \nonumber\\
\frac{1}{2}CC^2(V,t)j^2\il_{}(\phiv)^{2}\rho_{\epsilon}^{2j-2}\rho^2\,dx+
\frac{\lambda_0^{(\V)}}{2}\il_{}(\phiv)^{2}\rho_{\epsilon}^{2j}\nonumber\\
\frac{1}{2}
\cf\il_{}\il_{}\frac{(\rho_{\epsilon}^{j}(x)-\rho_{\epsilon}^{j}(y))^2}{|x-y|^{d+\alpha}}\phiv(x)\phiv(y)\,dx\,dy,
\end{eqnarray}

so that

\begin{eqnarray}
\cf\il_{}\il_{}\frac{(\rho_{\epsilon}^{j}(x)-\rho_{\epsilon}^{j}(y))^2}{|x-y|^{d+\alpha}}\phiv(x)\phiv(y)\,dx\,dy\leq
\nonumber\\
CC^2(V,t)j^2\il_{}(\phiv)^{2}\rho_{\epsilon}^{2j-2}\rho^2\,dx+
\lambda_0^{(\V)}\il_{}(\phiv)^{2}\rho_{\epsilon}^{2j}.\label{Eq3f2}
\end{eqnarray}

By Theorem
(\ref{TIS1f2}), with $S=\lam_0^{\V}, F=0, w=\phiv$ and $f=\rho_{\epsilon}^j$, we get from (IS1)

\begin{eqnarray}
\qquad\parallel\rho_{\epsilon}^{2j}\parallel_{L^{q}((\varphi_{0}^{\V})^{2}\,dx)}\leq
A\big(\cf\il_{}\il_{}\frac{(\rho_{\epsilon}^{j}(x)-\rho_{\epsilon}^{j}(y))^2}{|x-y|^{d+\alpha}}\phiv(x)\phiv(y)\,dx\,dy
\nonumber\\+
\lambda_0^{(\V)}\il_{}\rho_{\epsilon}^{2j}(\varphi_{0}^{\V})^{2}\,dx\big),
\end{eqnarray}

using (\ref{Eq3f2}),

\begin{eqnarray}
\qquad\parallel\rho_{\epsilon}^{2j}\parallel_{L^{q}((\varphi_{0}^{\V})^{2}\,dx)}\leq
ACC^2(\V,t)j^2\int(\varphi_{0}^{\V})^{2}\rho_{\epsilon}^{2j-2}\rho^2\,dx\nonumber\\
+2\lamv j^2\int(\varphi_{0}^{\V})^{2}\rho_{\epsilon}^{2j}\,dx.
\end{eqnarray}
Thus

\begin{equation}
\big(\int \rho_{\epsilon}^{2jq}(\varphi_{0}^{\V})^{2}\,dx)\big)^{\frac{1}{q}}\leq
 ACC^2(\V,t)j^2\int(\varphi_{0}^{\V})^{2}\rho_{\epsilon}^{2j-2}\rho^2\,dx
+2\lamv j^2\int(\varphi_{0}^{\V})^{2}\rho_{\epsilon}^{2j}\,dx.
\end{equation}
 It is then easy to pass to the limit as $\epsilon\rightarrow0$, using e.g monotone convergence to obtain
 (\ref{iteratef2}).


\textbf{Step 2} we show when $V$ is bounded that
\begin{eqnarray}\label{xvf2}
\xi^{\V}\leq M(\V,t)\varphi_{0}^{\V},\ \forall\,t>0,
\end{eqnarray}

               iterate (\ref{iteratef2}),  define $j_k=2q^k$ for
$k=0,1...$ and
\begin{eqnarray}
\Theta_k=\big(\int\rho_{}^{j_k}(\varphi_{0}^{\V})^{2}\,dx\big)^{\frac{1}{j_k}}\ {\rm and}\
M(\V,t):= (ACC^2(\V,t)+2\lamv).
\end{eqnarray}

Then (\ref{iteratef2}) can be written as

\begin{eqnarray}
\Theta_{k+1}\leq
(M(\V,t)(q)^{2k})^{\frac{1}{2(q)^k}}\Theta_k.
\end{eqnarray}

Using this recursively yields

\begin{eqnarray}
\Theta_k\leq
M(\V,t)\Theta_0=M(\V,t)(\int\xi_{0}^{\V})^{2}\,dx)^{\frac{1}{2}}\leq M(\V,t),
\end{eqnarray}

for all $k=0,1,..$ .  Since the right-hand-side of the latter inequality is independent from $k$, we deduce

\begin{eqnarray}
\displaystyle\lim_{k\rightarrow\infty}\Theta_k=
\displaystyle\sup_{\Om}\rho\leq M(\V,t),
\end{eqnarray}
and this shows (\ref{xvf2}).
\end{proof}

\begin{proof}[Proof of Theorem 4.3.4]

 Let $$ V_k(x):=\min(V, k), k>0.$$  Then $L_k:=L_0-V_k$ increases in the strong resolvent sense to $L_V$. Since
 $L_V$ has compact resolvent, the latter convergence is even uniform (see \cite[Lemma 2.5]{bbb}).
Thus setting $\lam_0^{(k)}$'s the ground state energy  of the $L_k$'s , $\phiok$ its associated ground state
and $\xi_0^{(k)}:=L_k^{-1}1$ we obtain
\begin{eqnarray}
\lam_0^{(k)}\to\lam_0\ {\rm ,}\ \phiok\to\phiv\,{\rm and}~~ \xi_0^{(k)}\to\xiv\ {\rm in}\ L^2(\Om,dx).
\end{eqnarray}

Using Lemma 4.3.6, 4.3.3, 4.3.4 and Theorem 4.3.2 , it easy to be prove that\\
         $\lim_{k\to\infty}M(V_k,t)=M(V,t)\in(0,\infty)$ and
\begin{eqnarray}
 \xi^{(k)}\leq M(V_k,t)\varp_0^{(k)},\ a.e.\ \forall\,k\ {\rm large}.
\end{eqnarray}

Then $\xiv\leq M(V,t)\varp_0^{V},\ a.e.$.
which was to be proved.
\end{proof}
\section{The critical case}
The critical case differs in some respects from the subcritical one. The most apparent difference is that
the critical  quadratic form is no longer closed on the starting fractional Sobolev space $\Wo$.
Consequently the proof of Lemma \ref{ground-repf2} is no more valid to express the ground state for the simple
reason that it may not belong to $\Wo$. (See \cite{ali-IOP})\\
We shall however prove that the critical form is closable and has compact resolvent by mean of a
Doob's transformation. An approximation process will then lead to extend the identity of Lemma \ref{ground-repf2}
helping therefore to get the sharp estimate of the ground state.\\
The development of this section depends heavily on the following improved Sobolev inequality holds true:
there is a finite constant $C_S>0$ and $r>1$  such that
\begin{eqnarray}\label{ISI2f2}
(IS): &~~& \parallel \!f^2\!\parallel_{L^{r}}
\leq C_S\big(\mathcal{E}[f]-\int V_*  f^2(x)\,dx\big),\ \forall\,f\in C_c^{1}(\Omega).
\end{eqnarray}

\begin{rk}

 We observe  that if $d\geq3$ the  potentials (\ref{pot0f2}) and (\ref{pot1f2}) satisfy (\ref{ISI2f2}) with $r: =
 \frac{d}{d-\alpha}$ if $ 0<c< c^* $  and  with any $ 1<r<\frac{d}{d-\alpha}$ for $c= c^*$.
 (See \cite{frank,fmt13})

\end{rk}

           Hence we only consider  solutions that belong to the hilbert space $H$, defined as the completion
           of $C^{\infty}_c(\Om)$ with  respect to the norm
$${\|f\|^2}_H=\calE_\Om[f]-\int V_* f^2(x)\,dx.$$

   We denote by $H'$ the dual of $H$. Observe that $\Wo\subset H \subset  L^2(\Om).$

     If $F\in H'$ we say that $f\in H$ is solution of
    \begin{equation}\label{HSolf2}
      L_{ V_*}f=(L_0-{V_*})f=F
    \end{equation}
     if $$\calE_{V_*}(f,g)=\int_\Om Fg \,dx,~~~for~ all~g\in H.$$

\begin{lema}\label{lemsf2} Suppose (\ref{ISI2f2}) and let $F\in H'$. Then there exists a unique solution $f\in H$
which is a solution of (\ref{HSolf2}), and if $F\geq0$ in the sense of distributions then $f\geq0$ a.e.
\end{lema}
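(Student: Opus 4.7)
The plan is to realize $\calE_{V_*}$ as the inner product of the Hilbert space $(H,\|\cdot\|_H)$ and apply the Riesz representation theorem; positivity will then follow from a nonlocal Beurling--Deny type argument after checking that truncation stays in $H$.

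By the very definition of $H$ as the completion of $C_c^\infty(\Om)$ with respect to the norm $\|f\|_H^2=\calE[f]-\int V_*f^2\,dx$, the symmetric bilinear form $\calE_{V_*}$ extends uniquely to a continuous, coercive bilinear form on $H\times H$ that coincides with the Hilbert inner product. The improved Sobolev inequality (IS) is what guarantees that $\|\cdot\|_H$ is in fact a norm on $C_c^\infty(\Om)$ (strict positivity) and that $H$ embeds continuously into $L^{2r}(\Om)$, and hence into $L^2(\Om)$ on bounded $\Om$; in particular the completion $H$ can be identified with a subspace of $L^2(\Om)$, so that a.e. limits of $\|\cdot\|_H$-Cauchy sequences make sense. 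Given $F\in H'$, the Riesz representation theorem immediately produces a unique $f\in H$ with $\calE_{V_*}(f,g)=\langle F,g\rangle_{H',H}$ for every $g\in H$, which is exactly the weak form of (3.15).

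For the positivity statement, the key ingredient is that $f^-:=\max(-f,0)\in H$ whenever $f\in H$. I would approximate $f$ by a sequence $(f_n)\subset C_c^\infty(\Om)$ in $H$ and use the pointwise identities $f_n^2=(f_n^+)^2+(f_n^-)^2$ and $(a^+-b^+)^2+(a^--b^-)^2\le (a-b)^2$ to deduce
\[
\|f_n^+\|_H^2+\|f_n^-\|_H^2=\calE[f_n^+]+\calE[f_n^-]-\int V_*(f_n^+)^2-\int V_*(f_n^-)^2\le \|f_n\|_H^2,
\]
so $(f_n^\pm)$ are Cauchy in $H$; their limits are identified with $f^\pm$ through the embedding $H\hookrightarrow L^2(\Om)$. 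Now I test the weak equation against $g=f^-$: since $F\ge 0$ in the sense of distributions and $f^-\ge 0$, one has $\calE_{V_*}(f,f^-)\ge 0$. On the other hand, writing $f=f^+-f^-$ and using the pointwise inequality $(f^+(x)-f^+(y))(f^-(x)-f^-(y))\le 0$ (which gives $\calE(f^+,f^-)\le 0$), one computes
\[
\calE_{V_*}(f,f^-)=\calE(f^+,f^-)-\calE[f^-]+\int V_*(f^-)^2\,dx\le -\|f^-\|_H^2.
\]
Combining the two estimates forces $\|f^-\|_H^2\le 0$, so $f^-=0$ a.e., i.e.\ $f\ge 0$.

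The main obstacle is the rigorous handling of the truncation step: because $H$ is defined with the modified norm $\|\cdot\|_H$ rather than the free fractional Sobolev norm of $\Wo$, one cannot simply quote that $f^\pm$ lie in $H$ from the Markovian property of the underlying Dirichlet form $\calE$. This is precisely where (IS) is used essentially, since it provides the continuous embedding $H\hookrightarrow L^2(\Om)$ needed to identify $L^2$-a.e.\ limits of Cauchy sequences in $H$ and to pass the pointwise Beurling--Deny calculation to the limit.
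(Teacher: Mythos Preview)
Your overall strategy matches the paper's: Riesz/Lax--Milgram gives existence and uniqueness, and positivity is obtained by approximating $f$ in $H$ by $f_n\in C_c^\infty(\Om)$ and exploiting the Dirichlet-form sign property $\calE(f_n^+,f_n^-)\le 0$.

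There is, however, a gap in your truncation step. From the displayed inequality $\|f_n^+\|_H^2+\|f_n^-\|_H^2\le \|f_n\|_H^2$ you conclude that $(f_n^\pm)$ are \emph{Cauchy} in $H$; this does not follow. The pointwise inequality $(a^+-b^+)^2+(a^--b^-)^2\le (a-b)^2$, applied with $a=f_n(x)$ and $b=f_n(y)$, indeed yields $\calE[f_n^+]+\calE[f_n^-]\le\calE[f_n]$ and hence \emph{boundedness} of $(f_n^\pm)$ in $H$, but it gives no control on $\|f_n^\pm-f_m^\pm\|_H$, since $(f_n-f_m)^\pm\neq f_n^\pm-f_m^\pm$ in general. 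The paper's route is precisely the repair: from boundedness in the Hilbert space $H$ one extracts a weakly convergent subsequence, identifies its weak limit as $f^-$ via the embedding $H\hookrightarrow L^2(\Om)$ (using that $f_n\to f$ in $L^2$ forces $f_n^-\to f^-$ in $L^2$), and then passes to the limit in the inequality $\|f_n^-\|_H^2\le -\calE_{V_*}(f_n,f_n^-)=-\langle F_n,f_n^-\rangle$ (valid at level $n$ because $f_n^\pm\in\Wo$) using weak lower semicontinuity of $\|\cdot\|_H$ together with $F_n\to F$ in $H'$.

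A related point: your last computation writes $\calE(f^+,f^-)$ for the limit $f\in H$, but in the critical case $f$ need not lie in $\Wo$, so this quantity is not a priori defined. The Beurling--Deny calculation must therefore be carried out for the approximants $f_n$ (where $f_n^\pm\in\Wo$) and the limit taken only at the very end, as above.
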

  \begin{proof} We can assume that $F>0$. It follows from Lax-Milgram
lemma that there exists a unique $f \in H$ such that
$$\calE_{V_*}(f,g)=\int_\Om Fg \,dx,~~~\forall~g\in H.$$

 We now show that $f\geq0$.  By definition of $H,$  there exists $f_k$ in $C^{\infty}_c(\Om)$  converging to
 $f$ in $H$. Letting $F_k=(-\Delta)^{\frac{\alpha}{2}} f_k -V_* f_k,$ it follows that $F_k\in H'$ and
 $F_k\rightarrow F$ in $H'.$

 Then by \cite[Lemma 3.3]{frank} $f_k\in W_0^{\alpha/2,2}(\Om)$, yielding that $f_k^-\in
 W_0^{\alpha/2,2}(\Om)$.\\
Activating Sobolev inequality (\ref{ISI2f2}) together with identity (\ref{Lapff2}) and utilizing the fact that
$F_k=(-\Delta)^{\frac{\alpha}{2}} f_k -V_* f_k,$  we obtain:
\begin{eqnarray}
 \|(f_k^-)^2\|_{H}&=&\big(\frac{1}{2}{\calA}(d,\alp)\int\int
 \frac{(f_k^{-}(x)-f_k^{-}(y))^2}{|x-y|^{d+\alpha}}\,dx\,dy-\int V_*(x){(f_k^{-})}^2(x)\,dx\big)\nonumber\\
  &\leq &-\bigg(\frac{1}{2}{\calA}(d,\alp)\int\int\frac{(f_k(x)-f_k(y))
  (f_k)^{-}(x)-f_k^{-}(y))}{|x-y|^{d+\alpha}}\,dx\,dy\nonumber\\
 &-&\int V_*(x)f_k(x)f_k^{-}(x)\,dx\bigg)\nonumber \\
&=&- \calE_{V_*}(f_k,f_k^-)\nonumber\\
 &=&-\int F_k f_k^{-}(x)\,dx\leq 0.\label{Eq4f2}
\end{eqnarray}
In the 'passage' from the first to the second inequality, we used the fact that for any Dirichlet form
${\mathcal{D}}$ one has ${\mathcal{D}}(f^+,f^-)\leq 0$ (See \cite[Theorem 4.4-i)]{roeckner-ma}), whereas the
equality before the last one is obtained with the help of the identity (\ref{Lapff2}).\\
To pass to  te limit in the last equation, we just  neet to prove that $f_k^-$ ramains boundes in $H$.

\begin{eqnarray}
 \|(f_k^-)\|^2_{H}&=&\frac{1}{2}{\calA}(d,\alp)\int\int
 \frac{(f_k^{-}(x)-f_k^{-}(y))^2}{|x-y|^{d+\alpha}}\,dx\,dy-\int V_*(x){(f_k^{-})}^2(x)\,dx\nonumber\\
  &= &\frac{1}{2}{\calA}(d,\alp)\int\int
 \frac{(f_k^{-}(x)-f_k^{-}(y))^2}{|x-y|^{d+\alpha}}\,dx\,dy-\int V_*(x){(f_k^{})}^2(x)\,dx\nonumber\\
 &+&\int V(x)(f_k^{+})^2(x)\,dx\nonumber \\
&\geq&\frac{1}{2}{\calA}(d,\alp)\int\int
 \frac{(f_k^{-}(x)-f_k^{-}(y))^2}{|x-y|^{d+\alpha}}\,dx\,dy-\int V_*(x){(f_k^{})}^2(x)\,dx\nonumber\\
 &+&\frac{1}{2}{\calA}(d,\alp)\int\int
 \frac{(f_k^{+}(x)-f_k^{+}(y))^2}{|x-y|^{d+\alpha}}\,dx\,dy\nonumber \\
&=&\frac{1}{2}{\calA}(d,\alp)\int\int
 \frac{(f_k^{}(x)-f_k^{}(y))^2}{|x-y|^{d+\alpha}}\,dx\,dy-\int
 V_*(x){(f_k^{})}^2(x)\,dx\nonumber\\&+&2\calE(f_k^{-},f_k^{+})\nonumber\\
   &\leq&\|(f_k)\|^2_{H} .\nonumber\\
\end{eqnarray}
Letting $k\rightarrow\infty$ in (\ref{Eq4f2}), we get
  $f^{-}\equiv 0$ in $\Om$ yielding $f\geq0.$
%


  \end{proof}

  Let $\dot\calE_*$ be the quadratic form defined by
\begin{eqnarray}
\calF:=D(\dot\calE_*)=\{ f\in H , L_{V_{*}}f\in L^2(\Om)\},\\\ \dot\calE_*[f]=\calE[f]-\int V_* f^2\,dx,\
\forall\,f\in \calF.
\end{eqnarray}
Let $S$ and $F$ be two real-valued, measurable  a.e. positive and essentially bounded functions  on $\R^d.$
  Let $ w_*\in H$ be solution of
   \begin{equation}\label{eqw2f2}
     L_{*} w_*=Sw_*+F.
   \end{equation}
   \begin{lema} There is a finite constant $\tilde{C_0}$ such that
\begin{eqnarray}
w_*\geq\tilde{C_0}\varp_0\ a.e.
\end{eqnarray}
\label{fisf2}
\end{lema}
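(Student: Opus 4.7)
The plan is to reduce this critical statement to its subcritical analog, Lemma \ref{w-lowerf2}, via a one-parameter subcritical approximation of $V_*$ together with a monotone limit. For $\epsilon\in(0,1)$ set $V_{*,\epsilon}:=(1-\epsilon)V_*$. The improved Sobolev inequality (\ref{ISI2f2}) implies $\int V_*f^2\le\calE[f]$ for every $f\in C_c^\infty(\Om)$; consequently $\int V_{*,\epsilon}f^2\le(1-\epsilon)\calE[f]$, so $V_{*,\epsilon}$ is strictly subcritical with constant $\kappa=1-\epsilon$ and the framework of Section~3 applies. Let $w_\epsilon\in\Wo$ denote a solution of $L_{V_{*,\epsilon}}w_\epsilon=Sw_\epsilon+F$. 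Applying Lemma \ref{w-lowerf2} with $V$ replaced by $V_{*,\epsilon}$ yields
\[
w_\epsilon\geq C_{0,\epsilon}\,\varp_0\quad\text{a.e.,}\qquad C_{0,\epsilon}:=C_G\int\varp_0(y)\bigl[S(y)w_\epsilon(y)+F(y)\bigr]\,dy.
\]

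Next I would pass to the limit $\epsilon\searrow 0$. Since $V_{*,\epsilon}\nearrow V_*$ pointwise, the forms $\calE_{V_{*,\epsilon}}$ decrease to the critical form and one obtains strong resolvent convergence of $L_{V_{*,\epsilon}}-S$ to $L_{V_*}-S$ via the standard monotone convergence theorem for quadratic forms. Combined with the uniqueness statement of Lemma \ref{lemsf2}, this yields $w_\epsilon\nearrow w_*$ almost everywhere. Monotone convergence on the right-hand side (valid because $S,F\ge 0$ and $w_\epsilon\nearrow w_*$) then gives
\[
w_*\geq\tilde{C}_0\,\varp_0\quad\text{a.e.,}\qquad\tilde{C}_0:=C_G\int\varp_0(y)\bigl[S(y)w_*(y)+F(y)\bigr]\,dy.
\]
The constant $\tilde{C}_0$ is strictly positive because $\varp_0>0$ on $\Om$, $w_*>0$ a.e.\ by Lemma \ref{lemsf2}, and by hypothesis either $S\not\equiv 0$ or $F\not\equiv 0$.

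The main obstacle is the rigorous identification of $\lim_{\epsilon\to 0}w_\epsilon$ with $w_*$. Two features complicate a direct appeal to resolvent convergence: the right-hand side $Sw_\epsilon+F$ depends on the unknown, so this is not a plain functional-calculus limit; and $w_*$ lives in the larger Hilbert space $H$, where the ``missing'' energy $\int V_*w_*^2$ need not be controlled uniformly in $\epsilon$. I expect to handle both points by writing the fixed-point form $w_\epsilon=(L_{V_{*,\epsilon}}-S)^{-1}F$, using that the resolvents are positivity-preserving and monotone in $\epsilon$, and identifying the monotone limit with the unique $H$-solution $w_*$ supplied by Lemma \ref{lemsf2}.
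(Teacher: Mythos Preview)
Your proposal follows essentially the same strategy as the paper: approximate $V_*$ from below by subcritical potentials, invoke Lemma~\ref{w-lowerf2} at each stage, and pass to the monotone limit to recover the same constant $\tilde C_0=C_G\int\varp_0(Sw_*+F)$. The only difference is cosmetic: you use the rescaling $V_{*,\epsilon}=(1-\epsilon)V_*$, whereas the paper uses the truncation $V_k=\min(V_*,k)$. The truncation has the slight practical advantage that each $V_k$ is bounded, so one gets at once $w_k\in\Wo$ together with an explicit $\Wo$-bound, which makes the identification of the limit with $w_*$---the step you rightly single out as the delicate one---more direct; with your scaling one must instead argue via the $H$-norm and Lemma~\ref{lemsf2}, as you indicate.
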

\begin{proof}Let
$$ V_k(x) = min(V_{*}(x), k), k > 0,$$
$w_k$ is a solution of (\ref{eqw2f2}) with the potential $V_{*}(x)$ replaced by the potential $V_k(x)$, we
obtain
$$((-\Delta)^{\alpha/2}w_k,w_k)_{L^2(\Om)}\leq \|w_k\|_H +  \|V_k\|_{L^\infty(\Om)}
(w_k,w_k)_{L^2(\Om)}<\infty.$$

 Since  $(w_k)_k$ is bounded  in $\Wo$ and nondecreasing in k, it converges to  $w$ in $L^2(\Om)$
  and that ${w_k}$ remains bounded in $H$ so that $w\in H$ and   $w= w_*$. On the other hand by Lemma
  \ref{w-lowerf2} it yields
$$
w_k\geq C_G\phio \int\phio(y)S(y)w_k(y)+C_G\phio \int\phio(y)F(y)\ q.e.
$$

We note that here all the integrals are finite, and  that we can
pass to the limit in the equation satisfied by $w_k$ and conclude that
$$
w_*\geq C_G\phio \int\phio(y)S(y)w_*(y)+C_G\phio \int\phio(y)F(y)\ q.e..,
$$

\end{proof}
   By analogy to the subcritical case we define the $w_*$-transform of $\dot\calE_*$ which we denote by
   $\dot Q_*$ and is defined by
\begin{eqnarray}
D(\dot Q_*):=\{f\colon\,w_*f\in\calF\}\subset L^2(\Om,w_*^2dx),\nonumber\\  \dot Q_*[f]=\dot\calE_*[w_*f]-\int w_*^2f^2S\,dx,\
\forall\,f\in\,D(\dot Q_*).
\end{eqnarray}
Following the computations made in the proof of Lemma \ref{closabilityf2} we realize that $\dot Q_*$ has the
following representation
\begin{eqnarray}
\dot Q_*[f]=\frac{\mathcal{A}(d,\alp)}{2}\il\il \frac{(f(x)-f(y))^2}{|x-y|^{d+\alpha}} w_*(x)w_*(y)\,dxdy\nonumber\\+\int f^2Fw_*\,dx,\
\forall\,f\in D(\dot Q_*).~~~~~~~~~~~~~~~~~~~~~~~~
\end{eqnarray}
\begin{lema}
 The form $\dot Q_*$ is closable in $L^2(\Om,w_*^2dx)$. Furthermore its closure  is a Dirichlet form in
 $L^2(\Om,w_*^2dm)$. It follows, in particular that $\dot\calE_*$ is closable.
\label{closability2f2}
\end{lema}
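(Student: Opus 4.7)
The plan is to establish closability of $\dot Q_*$ directly from its explicit integral representation via a Fatou-type argument, then verify the Markov property on the closure, and finally transfer closability back to $\dot\calE_*$ through the unitary equivalence induced by multiplication by $w_*$.

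First, note that Lemma \ref{fisf2} guarantees $w_*>0$ a.e.\ on $\Om$, so the map $U\colon f\mapsto w_*f$ is a bijective isometry between $L^2(\Om,w_*^2\,dx)$ and $L^2(\Om,dx)$ supported on $\{w_*>0\}$. The form $\dot Q_*$ is densely defined in $L^2(\Om,w_*^2\,dx)$ since $C_c^\infty(\Om)\subset D(\dot Q_*)$ (as $w_*$ is bounded on compact subsets, so that $w_*f\in\calF$ for $f\in C_c^\infty$). To show closability, I let $(f_n)\subset D(\dot Q_*)$ be a sequence with $f_n\to 0$ in $L^2(\Om,w_*^2\,dx)$ and $\dot Q_*[f_n-f_m]\to 0$ as $n,m\to\infty$. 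Passing to a subsequence, I may assume $f_n\to 0$ a.e.\ on $\Om$, hence $f_n(x)-f_n(y)\to 0$ a.e.\ on $\Om\times\Om$.

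Given $\ep>0$, pick $N$ such that $\dot Q_*[f_n-f_m]<\ep$ for all $n,m\geq N$. Fixing $m\geq N$ and letting $n\to\infty$, Fatou's lemma applied separately to the positive integrand $\frac{((f_n-f_m)(x)-(f_n-f_m)(y))^2}{|x-y|^{d+\alpha}}w_*(x)w_*(y)$ (which converges a.e.\ to $\frac{(f_m(x)-f_m(y))^2}{|x-y|^{d+\alpha}}w_*(x)w_*(y)$) and to the positive integrand $(f_n-f_m)^2 Fw_*$ (which converges a.e.\ to $f_m^2 Fw_*$) yields
\begin{equation*}
\dot Q_*[f_m]\leq\liminf_{n\to\infty}\dot Q_*[f_n-f_m]\leq\ep.
\end{equation*}
Since $\ep$ is arbitrary and $m$ can be taken $\geq N(\ep)$, we conclude $\dot Q_*[f_m]\to 0$, which is closability.

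Next, I verify that the closure, still denoted $\dot Q_*$, is Markovian. For a normal contraction $T\colon\R\to\R$ (meaning $T(0)=0$ and $|T(t)-T(s)|\leq|t-s|$), the pointwise estimates $(T\circ f(x)-T\circ f(y))^2\leq(f(x)-f(y))^2$ and $(T\circ f)^2\leq f^2$ show that both terms in the representation of $\dot Q_*[f]$ decrease under composition with $T$ (the first integrand is dominated pointwise, and in the killing term $Fw_*\geq 0$). Standard density of $D(\dot Q_*)$ in the domain of the closure together with lower semicontinuity then yields the Markov property on the closure. Combined with symmetry and density, this makes the closure a Dirichlet form on $L^2(\Om,w_*^2\,dx)$.

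Finally, since $U\colon f\mapsto w_*f$ is unitary from $L^2(\Om,w_*^2\,dx)$ onto $L^2(\{w_*>0\},dx)$ and $\dot\calE_*[w_*f]=\dot Q_*[f]+\int w_*^2 f^2 S\,dx$, with the potential $S$ bounded, the form $\dot\calE_*$ is a bounded perturbation of a closable form and is therefore itself closable. The main delicate point in the argument is ensuring that the Fatou step can be applied to the double integral despite the singular kernel $|x-y|^{-d-\alpha}$; this is handled by the subsequence choice $f_n\to 0$ a.e., which transfers to a.e.\ convergence on $\Om\times\Om$ of the whole integrand since $w_*w_*/|x-y|^{d+\alpha}$ is a fixed positive weight.
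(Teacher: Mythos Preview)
Your Fatou argument for closability is correct and provides a more elementary alternative to the paper's route. The paper instead introduces the maximal form
\[
\tilde Q[f]=\cf\int\int\frac{(f(x)-f(y))^2}{|x-y|^{d+\alpha}}w_*(x)w_*(y)\,dx\,dy+\int f^2 F w_*\,dx
\]
on the domain $\{f\in L^2(w_*^2dx):\tilde Q[f]<\infty\}$, checks $C_c^\infty(\Om)\subset D(\tilde Q)$ via the integrability condition already handled in Lemma~\ref{closabilityf2}, and then appeals to the Beurling--Deny--LeJan structure theorem to obtain that $\tilde Q$ restricted to $C_c^\infty$ is closable with Markovian closure; closability of $\dot Q_*$ follows because $D(\dot Q_*)\subset D(\tilde Q)$. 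Your final transfer of closability to $\dot\calE_*$ via the unitary $f\mapsto w_*f$ and the bounded perturbation by $S$ is fine and matches the paper.

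There is, however, a genuine gap in your Markov step. The pointwise inequalities $(Tf(x)-Tf(y))^2\le(f(x)-f(y))^2$ and $(Tf)^2\le f^2$ show that the \emph{integral expression} decreases under a normal contraction $T$, but they do not show that $T\circ f$ belongs to $D(\dot Q_*)=\{f:w_*f\in\calF\}$ or to its closure. Your appeal to ``density plus lower semicontinuity'' presupposes exactly this: from $f_n\in D(\dot Q_*)$ approximating $f$ you get $T\circ f_n\to T\circ f$ in $L^2(w_*^2dx)$, but lower semicontinuity of the closed form $Q_*$ only yields $Q_*[T\circ f]\le\liminf Q_*[T\circ f_n]$ once $T\circ f_n\in D(Q_*)$ is known, which is the point in question. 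The paper's detour through $\tilde Q$, whose domain is contraction-stable by construction, and through $C_c^\infty$, which is stable under smooth approximate contractions, is precisely what supplies this missing ingredient. A secondary issue: your justification that $C_c^\infty(\Om)\subset D(\dot Q_*)$ (``$w_*$ is bounded on compacts, so $w_*f\in\calF$'') is not adequate---in the critical regime $w_*$ need not be locally bounded, and even if it were, membership in $\calF=\{g\in H:L_{V_*}g\in L^2\}$ does not follow, since there is no Leibniz rule for $(-\Delta)^{\alpha/2}$.
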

\begin{proof} We first mention that since $\dot\calE_*$ is densely defined then $\dot Q_*$ is densely
defined as well.\\
Now we proceed to show that $\dot Q_*$ possesses a closed extension. To that end we introduce the form
$\tilde Q$ defined by
\begin{eqnarray}
&D(\tilde Q)&:=\big\{f\colon\,f\in L^2(\Om,w_*^2dx),\  \tilde Q[f]<\infty\big\}\nonumber\\
&\tilde Q[f]&=\frac{\mathcal{A}(d,\alp)}{2}\il\il \frac{(f(x)-f(y))^2}{|x-y|^{d+\alpha}}
w_*(x)w_*(y)\,dxdy\nonumber\\&&+\int f^2Fw_*\,dx,\,\ \forall\,f\in D(\tilde Q).
\end{eqnarray}
Arguing as in the proof of Lemma \ref{closabilityf2}, we obtain  that $C_c^{\infty}(\Om)\subset D(\tilde
Q)$.\\
Hence from the Beurling--Deny-LeJan formula (see \cite[Theorem 3.2.1, p.108]{fuku-oshima}), the form $\tilde
Q$ is the restriction to $C_c^{\infty}(\Om)$ of a Dirichlet form and therefore closable and Markovian. Since
$D(\dot Q_*)\subset D(\tilde Q)$ we conclude that $\dot Q_*$ is closable and Markovian as well, yielding
that its closure is a Dirichlet form. Now the closability of $\dot\calE_*$ is an immediate consequence of
the closability of $\dot Q_*$  which finishes the proof.
\end{proof}
From now on we set $\calE_*$  the closure of $\dot\calE_*$ and $L_*$ the selfadjoint operator related to
$\calE_*$, respectively $Q_*$ the closure of $\dot Q_*$ and $H_*$ its related selfadjoint operator. Finally
$T_t^*:=e^{-tL_*},\ t>0$ and $S_t:=e^{-tH_*},\ t>0$. Obviously $H_*=w_*^{-1}L_*w_*$.\\

Of course the  inequality (\ref{ISI2f2}) extends to the elements of $D(\calE_*)$ with $\dot\calE_*$ replaced
by $\calE_*$.
The idea of using improved Sobolev type inequality to get estimates for  the ground state was already used
in \cite{benamor-osaka, davilla-dupaigne}.

\begin{theo} For every $t>0$, the operator $S_t$ is ultracontractive. It follows that
\begin{itemize}
\item[i)]The operators $S_t,\ t>0$ and hence $T_t^*,\ t>0$ are  Hilbert-Schmidt operators and the operator
    $L_{*}$ has a  compact resolvent.
\label{spec-prop1f2}
\item[ii)] $ker(L_*-\lam_0^*)=\mathbb{R}\phis$ with $\phis> 0\ a.e.$
\item[iii)] If $\Omega$ satisfies the uniform interior ball condition then
\begin{equation}
\phis(x)\geq\big( C_G \lam_0^*\int\phio(y)^{\alpha/2}\phis(y)\,dy\big)\phio(x)^{\alpha/2},\ a.e.
\label{estimate-eigenfunctionf2}
\end{equation}
%
\end{itemize}
\label{ground-criticalf2}
\end{theo}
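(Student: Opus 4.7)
The plan is to treat the three assertions in order and to reduce each to the machinery already developed in the subcritical section, interpreted through the Doob transform by $w_*$.

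For part (i), I would first extend the improved Sobolev inequality (\ref{ISI2f2}) by closure from $C_c^\infty(\Om)$ to the whole form-domain $D(\calE_*)$, and then push it through the $w_*$-transform exactly as in the proof of Theorem \ref{TIS1f2}: writing $g=w_*f$ and combining with Hardy's inequality (\ref{hardyf2}) yields a Sobolev-type bound on $(D(Q_*),Q_*)$ of the form
$$\|f^2\|_{L^q(w_*^2\,dx)}\leq A\bigl(Q_*[f]+\int S f^2 w_*^2\,dx\bigr),\quad\forall\,f\in D(Q_*).$$
Since $Q_*$ is a Dirichlet form by Lemma \ref{closability2f2}, the Varopoulos--Saloff-Coste criterion used in Theorem \ref{UCf2} applies verbatim and gives ultracontractivity $\|S_t\|_{L^1(w_*^2\,dx)\to L^\infty}\leq C t^{-s/2} e^{\|S\|_{\infty}t}$. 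Ultracontractivity produces a bounded density $q_t^*(x,y)$; since $\Om$ is bounded and $w_*\in L^2(\Om)$, the kernel $p_t^*(x,y)=w_*(x)w_*(y)q_t^*(x,y)$ of $T_t^*=w_* S_t w_*^{-1}$ acting on $L^2(\Om,dx)$ is square-integrable, so $T_t^*$ (hence $S_t$) is Hilbert--Schmidt, and compactness of $T_t^*$ at positive time forces the compact resolvent of $L_*$.

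For part (ii), $S_t$ is positivity-preserving because $Q_*$ is a Dirichlet form, and irreducible because its jump kernel $w_*(x)w_*(y)|x-y|^{-(d+\alpha)}$ is strictly positive a.e.\ on $\Om\times\Om$ (Lemma \ref{fisf2} guarantees $w_*>0$ a.e.). Combined with the compact resolvent of $H_*$, a standard Perron--Frobenius argument yields a simple lowest eigenvalue $\mu_0^*$ of $H_*$ with an a.e.-positive normalized eigenfunction $\psi_*$. The conjugation identity $H_*=w_*^{-1}L_* w_*$ then transfers this back to $L_*$: the function $\phis:=w_*\psi_*/\|w_*\psi_*\|_{L^2}$ spans $\ker(L_*-\lamos)$ with $\lamos=\mu_0^*$, and $\phis>0$ a.e.

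For part (iii), I would use the approximation employed at the end of the subcritical section. Set $V_k:=\min(V_*,k)\in L^\infty$; then $L_k:=L_0-V_k$ is a bounded (hence subcritical) perturbation of $L_0$, and as in the closing argument of Theorem \ref{comparison2f2} one has $\lamok\to\lamos$ and $\phiok\to\phis$ in $L^2(\Om)$. For each $k$ Lemma \ref{formulaf2} yields the representation
$$\phiok=K(V_k\phiok)+\lamok K\phiok\ \geq\ \lamok K\phiok.$$
The Green kernel estimate (\ref{lowerf2}) together with the lower bound $\phio\geq c\,\delta^{\alpha/2}$ from (\ref{LBf2}), valid under the uniform interior ball condition, enables one to bound $K\phiok$ from below by the weight $\phio^{\alpha/2}$ at the point $x$ and in the integrand, producing $\phiok(x)\geq (C_G\lamok\int\phio(y)^{\alpha/2}\phiok(y)\,dy)\,\phio(x)^{\alpha/2}$ a.e. Passing to the limit as $k\to\infty$ via Fatou on the integral then delivers (\ref{estimate-eigenfunctionf2}).

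The main obstacle lies in part (iii): since $\phis$ need not belong to $W_0^{\alpha/2,2}(\Om)$, the ground-state representation of Lemma \ref{formulaf2} cannot be applied to $\phis$ directly, so one must route the argument through the bounded truncations $V_k$ and carefully control the limit of $\int \phio^{\alpha/2}\phiok\,dy$ without losing the multiplicative constant. By contrast, parts (i) and (ii) amount to ``Doob transform plus Perron--Frobenius'' and should be routine once the Sobolev inequality has been extended to $D(\calE_*)$.
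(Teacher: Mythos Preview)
Your treatment of parts (i) and (ii) is essentially the paper's: ultracontractivity of $S_t$ is obtained by pushing the improved Sobolev inequality (\ref{ISI2f2}) through the $w_*$-transform and invoking the Dirichlet-form criterion of \cite{saloff-coste}, and the Hilbert--Schmidt/irreducibility/Perron--Frobenius chain is exactly what the paper sketches.

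For part (iii) your route is correct but needlessly indirect. The paper avoids the approximation $V_k\uparrow V_*$ altogether. Once (i) is established, $L_*$ has a Green kernel $G_*$, and since $V_*\geq 0$ the heat kernel of $T_t^*$ dominates that of $T_t$, whence $G_*\geq G$ pointwise. The eigenvalue equation then reads directly as
\[
\phis(x)=\lamos\int_\Om G_*(x,y)\phis(y)\,dy\ \geq\ \lamos\int_\Om G(x,y)\phis(y)\,dy,
\]
and the IUC lower bound (\ref{lowerf2}) for $G$ finishes the proof. So the ``main obstacle'' you identify---that $\phis$ may fail to lie in $\Wo$, blocking a direct use of Lemma \ref{formulaf2}---is not actually an obstacle: the representation $\phis=\lamos L_*^{-1}\phis$ needs only the eigenvalue equation and the existence of $G_*$, both of which follow from (i), not from membership of $\phis$ in $\Wo$. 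Your approximation argument works (the $L^2$-convergence $\phiok\to\phis$ suffices to pass to the limit along a subsequence pointwise and in the integral against the bounded weight), but it is a detour: it re-proves, via limits, the inequality $G_*\geq G$ that one can write down immediately from domination of semigroups.
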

\begin{proof} The proof that $S_t,\ t>0$ is ultracontractive runs as the one corresponding to the
subcritical case with the help of Lemma \ref{closability2f2} and inequality (\ref{ISI2f2}) as main
ingredient.\\
i)  Every ultracontractive operator has an almost everywhere bounded kernel and since $w_*\in L^2(\Om)$ one
get that $S_t,\ t>0$ is a Hilbert-Schmidt operator as well as $T_t^*$ and hence $L_*$ has compact
resolvent.\\
ii) Since $T_t^*,\ t>0$ has a nonnegative kernel it is irreducible and the claim follows from the well know
fact that the generator of every irreducible semigroup has a nondegenerate ground state energy with a.e.
nonnegative ground state.\\
iii) The fact that $T_t^*$ is a Hilbert-Schmidt operator yields that $L_*$ possesses a Greeen kernel, $G_*$
and that $G_*\geq G$. Writing
\begin{eqnarray}
\phis=\lam_0^*\int G_*(\cdot,y)\phis(y)\,dy\geq\lam_0^*\int G(\cdot,y)\phis(y)\,dy
\end{eqnarray}
and using the lower bound (\ref{lowerf2}) yields the result.\\
\end{proof}

Let  $(V_k)$ be an increasing sequence of positive potentials such that $V_k\uparrow V_*$ and there is a
constant $0<\kappa_k<1$ such that for every $k\in\N$ we have
\begin{eqnarray}
\int_{}f^{2}\,V_k(x) dx \leq\kappa_k\calE[f],\ \forall\,f\in \mathcal{F}_.
\label{infinitesimalf2}
\end{eqnarray}
For example the sequence $V_k=(1-\frac{1}{k})V_*$ satisfies the above conditions.\\
By the assumption $0<\kappa_k<1$, we conclude that the  following forms
\begin{eqnarray*}
D(\calE_{V_k})=\calF,\ \calE_{V_k}[f]=\mathcal{E}[f]-\int_{\Om}f^2V_k(x)\,dx,\ \forall\,f\in\calF,
\end{eqnarray*}
are closed in $L^2$. For every integer $k$, we shall designate by $L_k$ the self-adjoint operator related to
$\calE_{V_k}$.\\
Let $0<V_k\uparrow V_*$, then $L_k:=L-V_k$, increases in the strong resolvent sense to  $L_*$.
 Since $L_*$ has compact resolvent, the latter convergence is even uniform (see \cite[Lemma 2.5]{bbb}). Thus
 setting $\lam_0^{(k)}$'s the ground state energy  of the $L_k$'s , $\phiok$ its associated ground state and
 $\xi_0^{(k)}:=L_k^{-1}1$ we obtain
\begin{eqnarray}
\lam_0^{(k)}\to\lam_0^*\ {\rm ,}\ \phiok\to\phis{\rm~~ and~~} \xi_0^{(k)}\to\xis \ {\rm in}\ L^2(\Om,dx).
\label{approxif2}
\end{eqnarray}
For an accurate description of the behavior of the ground state, we shall extend formula (\ref{ground-repf2})
to $\phis$.

Finally we resume.
\begin{theo} Let $V\in L^1_{loc}$ be a positive potential. Then under assumptions , (IS)and (HI)   the
following sharp estimate for the ground state $\phis$ holds true
\begin{eqnarray*}
\big((AC C_1^2\inf_{t>0} t^{-s}e^{2t\lamos}+1)\big)^{-1}\xis
\leq\phis\leq\xis( C_1\inf_{t>0}
 t^{-s/2}e^{t\lamos}),\ a.e.
\end{eqnarray*}
\label{sharp-comparisonf2}

\end{theo}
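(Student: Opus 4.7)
The plan is to deduce the two inequalities for $L_*$ as limits of the corresponding comparison estimates in the subcritical regime, and then to optimise in the auxiliary parameter $t > 0$.

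First I would apply the subcritical machinery to the approximating operators $L_k = L_0 - V_k$ introduced in the paragraph around (\ref{infinitesimalf2}); for definiteness take $V_k := (1 - \tfrac{1}{k}) V_*$, which satisfies $\int f^2 V_k \, dx \leq (1 - \tfrac{1}{k}) \calE[f]$ because the critical bound $\int f^2 V_* \, dx \leq \calE[f]$ is embedded in (\ref{ISI2f2}). All the hypotheses of Section~3 are fulfilled by $L_k$, so Theorems \ref{comparison1f2} and \ref{comparison2f2} give, for every $t > 0$ and a.e.\ on $\Om$,
\begin{eqnarray*}
\phiok \leq C_1 t^{-s/2} e^{t \lamok} \, \xi_0^{(k)}, \qquad \xi_0^{(k)} \leq \bigl( A C_0 C_H C_1^2 t^{-s} e^{2t\lamok} + 2\lamok \bigr) \phiok.
\end{eqnarray*}
The key structural observation is that the constants $A, C_1, C_0, C_H$ can be chosen independent of $k$: Hardy's inequality (\ref{hardyf2}) is written in terms of $\phio$ (not $\phiok$); the Sobolev bound (IS) is assumed at the critical level and therefore remains valid with $V_k \leq V_*$ in place of $V_*$ with the same constant $C_S$; in the application to $\xi_0^{(k)}$ one has $S = 0$ and $F = 1$, so $C_0 = C_G \int \phio \,dx$; and $C_1$ depends, by Theorem \ref{UCf2}, only on $A$ and $s$.

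Next I would pass to the limit $k \to \infty$. By (\ref{approxif2}) the convergences $\lamok \to \lamos$, $\phiok \to \phis$ and $\xi_0^{(k)} \to \xis$ hold in $L^2(\Om, dx)$, and up to extracting a subsequence also a.e.\ on $\Om$. Freezing $t$ and letting $k \to \infty$ in both inequalities of Step~1 yields, almost everywhere,
\begin{eqnarray*}
\phis \leq C_1 t^{-s/2} e^{t \lamos} \, \xis, \qquad \xis \leq \bigl( A C C_1^2 t^{-s} e^{2t\lamos} + 2\lamos \bigr) \phis,
\end{eqnarray*}
where $C := C_0 C_H$. Since $\lamos$ is a fixed positive number while the divergent term $A C C_1^2 t^{-s} e^{2t\lamos}$ blows up on both sides of any fixed $t$, the additive constant $2\lamos$ can be absorbed into the $+1$ offset of the statement. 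Taking the infimum over $t > 0$ in each inequality (attained at an interior point, since $t \mapsto t^{-s/2} e^{t\lamos}$ tends to $+\infty$ both at $0$ and at $\infty$), and inverting the second inequality, produces the two-sided estimate announced in the theorem.

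The main technical obstacle is the passage to the limit: one needs the a.e.\ pointwise inequalities to survive, which requires more than the $L^2$-convergences of (\ref{approxif2}). The cleanest way is to invoke monotonicity: since $V_k \uparrow V_*$, the Green kernels $G_k$ increase pointwise to $G_*$ (the Greenian of $L_*$ exists by Theorem \ref{ground-criticalf2} (i)), so that $\xi_0^{(k)} = \int G_k(\cdot, y)\, dy \uparrow \xis$ by monotone convergence, and a parallel monotonicity argument, combined with the representation of $\phiok$ furnished by Lemma \ref{formulaf2} and the irreducibility of the limiting semigroup, yields $\phiok \uparrow \phis$ a.e.\ as well. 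A secondary point to be careful about is the stability of the ultracontractivity constant $C_1$ across the family $V_k$; this is built into Theorem \ref{UCf2}, whose proof delivers $C_1$ as a function of $A$ and $s$ alone, both of which are uniform in $k$ by the discussion in Step~1.
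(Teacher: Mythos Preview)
Your approach is correct and is essentially the one the paper intends: the theorem is stated immediately after the convergences (\ref{approxif2}) without a written-out proof, and the surrounding text (``Finally we resume'') makes clear that it is obtained by passing to the limit in the subcritical estimates of Theorems \ref{comparison1f2} and \ref{comparison2f2} along the approximating family $L_k$, exactly as you do. Your observation that the improved Sobolev inequality (IS) at the critical level furnishes a $k$-uniform constant $C_S$ (and hence uniform $A$ and $C_1$) is the crucial point that makes the limit meaningful, and it matches the role the paper assigns to (IS) in this section.
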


\begin{coro} We have

\begin{eqnarray}
\phis\sim\int G^{*}(\cdot,y)\,dy,\ a.e.
\end{eqnarray}
\label{endf2}
\end{coro}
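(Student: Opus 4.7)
The plan is to derive the corollary as an immediate consequence of Theorem \ref{sharp-comparisonf2}, by identifying $\xis$ with $\int G^{*}(\cdot,y)\,dy$ and verifying that the constants appearing in the two-sided estimate are finite and strictly positive.

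First I would recall that by the very definition adopted in this section, $\xis := L_*^{-1}\mathbf{1}$, and that Theorem \ref{ground-criticalf2}(i) provides a Green kernel $G^{*}$ for the operator $L_*$ (since $L_*$ has compact resolvent and a Hilbert--Schmidt semigroup). Consequently,
\begin{equation}
\xis(x)=L_*^{-1}\mathbf{1}(x)=\int_{\Om} G^{*}(x,y)\,dy,\quad a.e.
\end{equation}
Hence the corollary is exactly the statement $\phis\sim\xis$.

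Next, I would quote the two-sided bound given by Theorem \ref{sharp-comparisonf2}:
\begin{equation}
\bigl(AC C_1^2\inf_{t>0} t^{-s}e^{2t\lamos}+1\bigr)^{-1}\xis\;\leq\;\phis\;\leq\;\bigl(C_1\inf_{t>0} t^{-s/2}e^{t\lamos}\bigr)\xis,\quad a.e.
\end{equation}
To conclude $\phis\sim\xis$, it suffices to check that both multiplicative constants are positive and finite. The function $t\mapsto t^{-s/2}e^{t\lamos}$ is smooth on $(0,\infty)$, tends to $+\infty$ at $0^+$ and at $+\infty$, and a direct computation shows its unique minimum is attained at $t_0=s/(2\lamos)$ with finite positive value; the same holds for $t\mapsto t^{-s}e^{2t\lamos}$, with minimum at $t_1=s/(2\lamos)$. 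Therefore both infima are strictly positive real numbers, and the two constants in the inequality above lie in $(0,\infty)$.

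Combining the previous two steps yields constants $c_1,c_2\in(0,\infty)$ such that $c_1\xis\leq\phis\leq c_2\xis$ almost everywhere, which is exactly $\phis\sim\int G^{*}(\cdot,y)\,dy$. There is no real obstacle here: the work has already been done in Theorem \ref{sharp-comparisonf2}, and the only point to double-check is the finiteness of the optimized constants, which is an elementary one-variable calculus verification.
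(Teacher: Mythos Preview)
Your proposal is correct and follows exactly the approach implicit in the paper: the corollary is stated immediately after Theorem \ref{sharp-comparisonf2} with no separate proof, so the intended argument is precisely to read off $\phis\sim\xis$ from that two-sided estimate and to recognize $\xis=L_*^{-1}\mathbf{1}=\int_\Om G^{*}(\cdot,y)\,dy$. Your verification that the infima $\inf_{t>0}t^{-s/2}e^{t\lamos}$ and $\inf_{t>0}t^{-s}e^{2t\lamos}$ are finite positive numbers (minimized at $t=s/(2\lamos)$) is the only detail the paper leaves to the reader, and you have supplied it correctly.
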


We also derive by standard way the following large time asymptotics for the heat kernel.

\begin{coro} There is $T>0$ such that for every $t>T$,
\begin{eqnarray}
p_t^{*}(x,y)\sim e^{-\lamos t}\phis(x)\phis(y)\sim e^{-\lamos t}\xis(x)\xis(y),\,   a.e. .
\end{eqnarray}
It follows, in particular that
\begin{eqnarray}
 -\lamos=\lim_{t\to\infty}\frac{1}{t}\ln\big(\frac{p_t^{*}(x,y)} {\xis(x)\xis(y)}\big).
\end{eqnarray}
\end{coro}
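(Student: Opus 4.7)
The plan is to pass from the intrinsic ultracontractivity already in hand (in transformed form) via Theorem \ref{ground-criticalf2} to the heat-kernel asymptotic, and then to upgrade from $\phis\otimes\phis$ to $\xis\otimes\xis$ using Corollary \ref{endf2}.

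First I would specialize the Doob construction of Section 4 to the datum $w_{*}=\phis$, $S=\lamos$, $F=0$: here $w_{*}$ solves $L_{*}w_{*}=Sw_{*}$, the transformed form $Q_{*}$ reduces to the classical ground-state transform, and its semigroup $S_{t}=e^{-tH_{*}}$ is sub-Markovian on $L^{2}(\Om,\phis^{2}\,dx)$ with the constant $1$ as a fixed point. The ultracontractivity assertion of Theorem \ref{ground-criticalf2} then translates to intrinsic ultracontractivity of $T_{t}^{*}$, and the kernel $q_{t}$ of $S_{t}$ with respect to $\phis^{2}\,dx$ satisfies
$$q_{t}(x,y)=\frac{e^{\lamos t}\,p_{t}^{*}(x,y)}{\phis(x)\phis(y)}.$$

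Next I would run the standard spectral-gap argument to show $\sup_{x,y}|q_{t}(x,y)-1|\to 0$ as $t\to\infty$. Since $L_{*}$ has compact resolvent by Theorem \ref{ground-criticalf2}(i), the spectrum of $H_{*}$ is discrete with $0=\mu_{0}<\mu_{1}=\lam_{1}^{*}-\lamos\leq\cdots$, and $1$ is the unique normalized ground state in $L^{2}(\phis^{2}\,dx)$. Letting $P_{0}$ denote the corresponding rank-one projection, one writes $S_{t}-P_{0}=S_{t_{0}/2}\circ(S_{t-t_{0}}-P_{0})\circ S_{t_{0}/2}$ for a fixed $t_{0}>0$; the ultracontractivity of $S_{t_{0}/2}$ controls the outer factors as operators $L^{1}\to L^{2}$ and $L^{2}\to L^{\infty}$, while the spectral theorem gives $\|S_{t-t_{0}}-P_{0}\|_{L^{2}\to L^{2}}\leq e^{-\mu_{1}(t-t_{0})}$ for the middle one. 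This yields the pointwise bound $|q_{t}(x,y)-1|\leq Ce^{-\mu_{1}t}$, and choosing $T>0$ with $Ce^{-\mu_{1}T}<1/2$ gives, for every $t>T$,
$$p_{t}^{*}(x,y)=e^{-\lamos t}\phis(x)\phis(y)\bigl(1+O(e^{-\mu_{1}t})\bigr),$$
which is the first $\sim$. The second $\sim$ follows by multiplying through by the a.e.\ equivalence $\phis\sim\xis$ supplied by Corollary \ref{endf2}.

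The logarithmic limit is then immediate: taking $\log$, dividing by $t$, and letting $t\to\infty$ in the displayed asymptotic kills both the bounded term $\log(\phis(x)\phis(y)/(\xis(x)\xis(y)))$ and the $O(\cdot)$ correction, leaving exactly $-\lamos$. The main obstacle will be the middle step: although classical, the passage from operator-norm decay to a genuinely pointwise kernel estimate requires that the ultracontractivity bounds of Theorem \ref{ground-criticalf2} are uniform on $\Om\times\Om$, which in turn relies on the Hilbert--Schmidt property of $S_{t}$ so that $P_{0}$ can be subtracted at the kernel level rather than only in the operator sense.
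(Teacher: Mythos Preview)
Your proposal is correct and is precisely the ``standard way'' the paper alludes to without spelling out: the paper gives no proof of this corollary beyond that phrase, and your argument---ground-state (Doob) transform, ultracontractivity of $S_{t}$ from Theorem \ref{ground-criticalf2}, the Davies--Simon factorization $S_{t}-P_{0}=S_{t_{0}/2}(S_{t-t_{0}}-P_{0})S_{t_{0}/2}$ combined with the spectral gap, and finally Corollary \ref{endf2} to pass from $\phis$ to $\xis$---is exactly the classical route from IUC to the large-time heat-kernel asymptotic. Nothing is missing.
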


~~~\\
\begin{tabular}{ll}
M.A.~Beldi &  \quad \\
Faculty of Sciences of Tunis, Tunisia &  \quad \\
{\small mohamedali.beldi@issatm.rnu.tn} &  \quad
{\small} \\
\end{tabular}

\bibliographystyle{alpha}


\end{document}